\newcommand{\R}{{\mathbb R}}
\newcommand{\N}{{\mathbb N}}
\newcommand{\EE}{{\mathbb E}}
\newcommand{\PP}{{\mathbb P}}
\newcommand{\eul}{{\widehat X}^{\delta}}
\newcommand{\eultr}{{\widehat Z}^{\delta}}
\newcommand{\ind}{1}
\newcommand{\usn}{\underline {s}^{\delta}}
\newcommand{\utn}{\underline {t}^{\delta}}
\newcommand{\uun}{\underline {u}^{\delta}}
\newcommand{\sgn}{\operatorname{sgn}}
\newcommand{\eps}{\varepsilon}
\newcommand{\Tm}{{\mathcal T}}
\theoremstyle{plain}
\newtheorem{theorem}{Theorem}
\newtheorem{prop}{Proposition}
\newtheorem{lemma}{Lemma}
\theoremstyle{definition}
\begin{document}
\title[An adaptive method of order 1  for SDEs with discontinuous drift coefficient]{An adaptive strong order 1 method for SDEs with discontinuous drift coefficient}


\author[Yaroslavtseva]
{Larisa Yaroslavtseva}
\address{
Faculty of Computer Science and Mathematics \\
University of Passau\\
Innstrasse 33\\
94032 Passau\\
Germany} \email{larisa.yaroslavtseva@uni-passau.de}

\begin{abstract}
In recent years, an intensive study of strong approximation of stochastic differential equations (SDEs) with a drift coefficient that may have discontinuities in space has begun. In many of these results it is assumed that the drift coefficient satisfies piecewise regularity conditions and the diffusion coefficient is Lipschitz continuous  and non-degenerate at the discontinuity points of the drift coefficient.
For scalar SDEs of that type the best $L_p$-error rate known so far  for approximation of the solution at the final time point  is $3/4$ in terms of the number of evaluations of the driving Brownian motion and it is achieved by the transformed equidistant quasi-Milstein scheme, see~\cite{MGY19b}. Recently in~\cite{MGY21} it has been  shown  that for such SDEs the $L_p$-error rate  $3/4$ can not be improved in general  by no numerical method based on  evaluations of the driving Brownian motion at fixed time points.  In the present article we  construct for the first time in the literature
 a   method based on sequential evaluations of the driving Brownian motion, which achieves an $L_p$-error rate of at least $1$ in terms of the average number of evaluations of the driving Brownian motion for such SDEs.
\end{abstract}

\maketitle

\section{Introduction}

In this article we consider a scalar autonomous stochastic differential equation (SDE)
\begin{equation}\label{sde000}
\begin{aligned}
dX_t & = \mu(X_t) \, dt + \sigma(X_t) \, dW_t, \quad t\geq 0,\\
X_0 & = x_0,
\end{aligned}
\end{equation}
 where $x_0\in\R$ is the initial value,  $\mu\colon\R\to\R$ is the drift coefficient, $\sigma\colon \R\to\R$ is the diffusion coefficient, $W=(W_t)_{t\geq 0}$ is a  $1$-dimensional Brownian motion  and we assume that the SDE~\eqref{sde000} has a unique strong solution $X$. Our computational task is  $L_p$-approximation of $X_1$  by numerical methods that are based on finitely many evaluations of the driving Brownian motion $W$ at  points in $[0,1]$ in the case when the drift coefficient $\mu$ may have finitely many discontinuity points.

Strong approximation of  SDEs with a discontinuous drift coefficient has gained a lot of interest in the literature in recent years.  See~\cite{ g98b, gk96b} for results on convergence in probability and  almost sure convergence  of the Euler-Maruyama scheme  and~\cite{DG18, GLN17, HalidiasKloeden2008,  LS16,  LS15b, LS18, MGY19b, MGY20, NS19, NSS19,  Tag16, Tag2017b, Tag2017a, PS19}
for results on $L_p$-approximation. In many of these articles it is assumed that the drift coefficient satisfies piecewise regularity conditions and the diffusion coefficient is Lipschitz continuous  and non-degenerate at the discontinuity points of the drift coefficient. For  SDEs of that type the best $L_p$-error rate known up to now for approximation of $X_1$    is $3/4$, see~\cite{MGY19b}.
 In the present article we  construct for the first time in the literature
 a numerical  method, which achieves an $L_p$-error rate of at least $1$ for  such SDEs. 
 
To be more precise, let us consider the following assumptions on the coefficients $\mu$ and $\sigma$. 

\begin{itemize}
\item[($\mu$1)] There exist $k\in\N$ 
and $\xi_0, \ldots, \xi_{k+1}\in [-\infty,\infty]$ with $-\infty=\xi_0<\xi_1<\ldots < \xi_k <\xi_{k+1}=\infty$ such that
 $\mu$ is Lipschitz continuous on the interval $(\xi_{i-1}, \xi_i)$ for all $i\in\{1, \ldots, k+1\}$,
\item[($\sigma$1)] $\sigma$ is Lipschitz continuous on $\R$ and $\sigma(\xi_i) \neq 0$ for all $i\in\{1,\ldots,k\}$,
\end{itemize}

If  ($\mu$1) and ($\sigma$1) hold then   
the SDE~\eqref{sde000} has a unique strong solution, see~\cite{LS16}.   In~\cite{ LS16,  LS15b, LS18, MGY19b, MGY20,  NSS19} the $L_p$-approximation of $X_1$ under the assumptions
 ($\mu$1) and ($\sigma$1)
has been analyzed.
In particular, in~\cite{LS16, LS15b} the transformed equidistant  Euler-Maruyama 
scheme 
has been  
constructed, which  achieves an $L_2$-error rate of at least $1/2$  in terms of the number of evaluations of the driving Brownian motinon $W$.   
After that,
 in~\cite{NSS19} an adaptive Euler-Maruyama scheme
has been
constructed, which achieves up to a logarithmic factor
an $L_2$-error rate of at least $1/2$ in terms of the average number of evaluations of $W$ used by the scheme. Finally, in~\cite{MGY20} it
has been 
proven that the classical equidistant  Euler-Maruyama 
scheme 
achieves for all $p\in [1,\infty)$ an $L_p$-error rate of at least $1/2$  
in terms of the number  of evaluations of $W$
as in the case of SDEs with 
globally Lipschitz continuous coefficients.

In~\cite{MGY19b} the first higher-order method has been constructed for such SDEs. This method is based on  equidistant evaluations  of  $W$ and 
 achieves for all $p\in[1, \infty)$ an $L_p$-error rate of at least $3/4$ in terms of the number of evaluations of $W$ if $\mu$ and $\sigma$ satisfy ($\mu$1) and ($\sigma$1) and additionally the following  piecewise regularity assumptions \vspace{0.1cm}
\begin{itemize}
\item[($\mu$2)]  $\mu$ has a Lipschitz continuous derivative on   $(\xi_{i-1}, \xi_i)$  for every $i\in\{1, \ldots, k+1\}$,\vspace{0.1cm}
\item[($\sigma$2)]  $\sigma$ has a Lipschitz continuous derivative on  $(\xi_{i-1}, \xi_i)$  for every $i\in\{1, \ldots, k+1\}$. \vspace{0.1cm}
\end{itemize}
Furthermore, in~\cite{NS19} it has been shown that for SDEs~\eqref{sde000} with additive noise and a bounded and piecewise $C^2_b$ drift coefficient $\mu$ the equidistant Euler-Maruyama scheme in fact achieves an $L_2$-error rate of  at least $3/4-$ in terms of the number of evaluations of $W$. Note that in this case the Euler-Maruayama scheme coincides with the Milstein scheme.

Recently in~\cite{MGY21} it has been shown  that an $L_p$-error rate better than $3/4$ can not be achieved in general under the assumptions ($\mu$1), ($\mu$2), ($\sigma$1) and ($\sigma$2) by no numerical method based on  evaluations of $W$ at fixed time points in $[0,1]$. More precisely,  it has been proven in~\cite{MGY21} that if
 $\sigma=1$ and if  $\mu$ satisfies ($\mu$1) and ($\mu$2), $\mu$ is bounded, increasing and there exists $i\in\{1,\dots,k\}$ such that $\mu(\xi_i+)\not=\mu(\xi_i-)$, then there exists $c\in(0, \infty)$
such that   for all $p\in[1, \infty)$ and  all  $n\in\N$, 
\begin{equation}\label{lb3}
\inf_{\substack{
       t_1,\dots ,t_n \in [0,1]\\
        g \colon \R^n \to \R \text{ measurable} \\
       }}	 \EE\bigl[|X_1-g(W_{t_1}, \ldots, W_{t_n})|^p\bigr]^{1/p}\geq \frac{c}{n^{3/4}}.
\end{equation}

Note that the lower bound \eqref{lb3} does not cover adaptive methods, i.e. methods that may choose the number as well as the
location of the evaluations of the Brownian motion  $W$ in a sequential
 way dependent on the values of $W$ observed so far. See e.g.~\cite{Gaines1997, Hoel2012,Hoel2014,  LambaMattinglyStuart2007, MG02_habil,m04, NSS19,  RW2006} for examples of such methods.  It is well-known  
 that for a large class of SDEs~\eqref{sde000} with  globally Lipschitz continuous coefficients  the best possible $L_p$-error rate  that can be achieved by non-adaptive methods coincides with the best possible $L_p$-error rate  that can be achieved by adaptive methods  and is equal to $1$, see~\cite{MG02_habil,m04}.  Moreover, up to now there is no example of an SDE with globally Lipschitz continuous coefficients known in the literature, for which adaptive methods are superior to non-adaptive ones with respect to the $L_p$-error rate.
However, the superiority of adaptive methods to non-adaptive ones with respect to the $L_p$-error rate has recently been demonstrated  in~\cite{HH16, MGRY2018} for some examples of SDEs with non-globally Lipschitz continuous drift or diffusion coefficients. 

In view of the latter results it is 
 natural to ask    whether 
 there exists an adaptive method 
 that achieves under the assumptions ($\mu$1), ($\mu$2) and ($\sigma$1), ($\sigma$2) a better $L_p$-error rate than the rate $3/4$.
  To the best of our knowledge the answer to this question
was not known  in the literature up to now. In the present article we answer this question in the positive.  More precisely, we construct  a family of approximations $\eul_1$ with $\delta\in(0, \delta_0]$ for some $\delta_0>0$ such that each approximation $\eul_1$ is based on at most  $c\cdot \delta^{-1}$ adaptively chosen evaluations of $W$ in the interval $[0,1]$ on average and such that  for all $p\in[1, \infty)$ and all $\delta\in(0, \delta_0]$,
\[
\EE\bigl[|X_1- \eul_1|^p\bigr]^{1/p} \le c(p)\cdot \delta,
\]
where the constants $c, c(p)\in (0, \infty)$  do not depend on $\delta$, see Theorem \ref{Thm2}. Thus, the  approximations $\eul_1$ achieve an $L_p$-error rate  of at least $1$ in terms of the average number of evaluations of $W$.
The methods $\eul_1$ are obtained by applying a suitable transformation $G\colon\R\to\R$ to the strong solution $X$ of the SDE \eqref{sde000} such that the transformed solution $Z=(G(X_t))_{t\geq 0}$ is a strong solution of a new SDE with  coefficients $\widetilde \mu$ and $\widetilde \sigma$ which  satisfy ($\mu$1), ($\mu$2) and ($\sigma$1), ($\sigma$2), respectively, and such that  $\widetilde \mu$ is continuous, which implies that $\widetilde\mu$ is Lipschitz continuous. An adaptive  quasi-Milstein scheme $\widehat Z^\delta=(\widehat Z^\delta_t)_{t\geq 0}$ is  used to approximate $Z$ and the approximation $\eul_1$ is then given by $G^{-1}(\widehat Z^\delta_1)$. The adaptive time stepping   strategy used for the adaptive  quasi-Milstein scheme $\widehat Z^\delta$ is an appropriate modification of the adaptive time stepping     strategy used for the adaptive Euler-Maruyama scheme in~\cite{NSS19}. 
We add that an $L_p$-error rate better than $1$ can not be achieved in general under the assumptions ($\mu$1), ($\mu$2) and ($\sigma$1), ($\sigma$2) by no adaptive method based on finitely many evaluations of $W$, see~\cite{hhmg2019, MG02_habil,m04} for corresponding lower error bounds.

The implementation of our method requires the ability to evaluate  the functions $G$ and $G^{-1}$ at each step of the adaptive  quasi-Milstein scheme $\widehat Z^\delta$. While the transformation $G$ is known explicitly, this is so far  not the case for $G^{-1}$, and therefore a numerical inverse of $G$ has to be used to approximate $G^{-1}$.  This makes our method rather slow in practice. 
We conjecture however that the transformation of the SDE~\eqref{sde000} is actually not needed and that an adaptive quasi-Milstein scheme  for the SDE~\eqref{sde000} itself achieves under the assumptions ($\mu$1), ($\mu$2) and ($\sigma$1), ($\sigma$2) an $L_p$-error rate of at least $1$ in terms of the average number of evaluations of $W$. The proof of this conjecture will be the subject of future work.

We briefly describe the content of the paper. In Section~\ref{Not} we introduce some notation. Section~\ref{QM} contains the construction and the error and cost analysis 
of the adaptive quasi-Milstein scheme in the case when the coefficients of the SDE~\eqref{sde000} satisfy the assumptions ($\mu$1), ($\mu$2) and ($\sigma$1), ($\sigma$2) and the drift coefficient is continuous, see Theorem~\ref{Thm1}. In Section~\ref{threefour} we 
 introduce
the bi-Lipschitz transformation $G$ that is then used to construct a method of order $1$ under the assumptions ($\mu$1), ($\mu$2) and ($\sigma$1), ($\sigma$2),  see Theorem~\ref{Thm2}.
Section~\ref{Proofs} is devoted to the proof of Theorem~\ref{Thm1}. 

\section{Notation}\label{Not}
For  $A\subset \R$ and $x\in\R$ we put $d(x, A)=\inf\{|x-y|\colon y\in A\}$. For a function $f\colon \R\to\R$ we define $d_f\colon \R\to\R$ by
\[
d_f(x) = \begin{cases} f'(x), & \text{if $f$ is differentiable in $x$},\\
0, & \text{otherwise.}\end{cases}
\]

\section{An adaptive quasi-Milstein scheme for SDEs with Lipschitz continuous coefficients}\label{QM}

Let
$ ( \Omega, \mathcal{F}, \PP ) $ 
be a complete probability space,
let
$
  W \colon [0,\infty) \times \Omega \to \R
$
be a Brownian motion
on $ ( \Omega, \mathcal{F}, \PP )$, let $x_0\in\R$ and let $\mu\colon\R\to\R$ and $ \sigma\colon\R\to\R$ be functions that satisfy the assumptions ($\mu$1), ($\mu$2)  and ($\sigma$1), ($\sigma$2), respectively, and assume that $\mu$ is continuous.
We consider the SDE
\begin{equation}\label{sde01}
\begin{aligned}
dX_t & = \mu(X_t) \, dt + \sigma(X_t) \, dW_t, \quad t\geq 0,\\
X_0 & = x_0.
\end{aligned}
\end{equation}
Observe that in this case both $\mu$ and $\sigma$ are Lipschitz continuous on $\R$, and therefore the SDE \eqref{sde01}  has a unique strong solution 
and for every $p\in [1,\infty)$ it holds
\begin{equation}\label{mom}
\EE\bigl[\sup_{t\in[0,1]}|X_t|^p\bigr] < \infty.
\end{equation}

Put $\Theta=\{\xi_1, \ldots, \xi_k\}$
and for $\varepsilon >0$ let
\[
\Theta^{\varepsilon}=\{x\in\R\colon d(x, \Theta)<\varepsilon\}.
\]
Let $\varepsilon_0\in(0,1]$ and assume that
\[
\varepsilon_0\leq \frac{1}{2}\min\{\xi_i-\xi_{i-1}\colon i=2, \ldots, k\}
\]
if $k\geq 2$.
For $\delta>0$ put
\[
\varepsilon_1^{\delta}=\sqrt \delta\cdot\log^2(1/\delta), \qquad \varepsilon_2^{\delta}=\delta\cdot \log^4(1/\delta).
\]
Let $\delta_0\in(0,1)$ be small enought such that for all $\delta\in (0, \delta_0]$ it holds
\begin{equation}\label{eps}
\varepsilon_2^{\delta}\leq\varepsilon_1^{\delta}\leq \varepsilon_0/2.
\end{equation}
For $\delta\in (0, \delta_0]$ we define a time-continuous adaptive  quasi-Milstein scheme 
$\eul=(\eul_t)_{t\geq 0}$  recursively by
\begin{equation}\label{mil1}
\tau_0^{\delta}=0, \quad \eul_{\tau_0^{\delta}}=x_0
\end{equation}
and
\begin{equation}\label{mil2}
\begin{aligned}
\tau_{i+1}^{\delta}&= \tau_i^{\delta}+h^{\delta}(\eul_{\tau_i^{\delta}}),\\
\eul_{t}&=\eul_{\tau_i^{\delta}}+\mu(\eul_{\tau_i^{\delta}})\cdot (t-\tau_i^{\delta})+\sigma(\eul_{\tau_i^{\delta}})\cdot (W_t-W_{\tau_i^{\delta}})\\
&\qquad\qquad+\frac{1}{2}\sigma d_\sigma (\eul_{\tau_i^{\delta}})\cdot\bigl((W_t-W_{\tau_i^{\delta}})^2-(t-\tau_i^{\delta})\bigr), \quad t\in (\tau_i^{\delta},\tau_{i+1}^{\delta}],
\end{aligned}
\end{equation}
for $i\in\N_0$,
where the step size function $h^{\delta}\colon\R\to (0,1)$ is defined by
\begin{align}\label{step}
h^{\delta}(x) = \begin{cases} 
\delta, & x\not\in\Theta^{\varepsilon_1^{\delta}},\\
\Bigl(\frac{d(x, \Theta)}{\log^2(1/\delta)}\Bigr)^2, & x\in \Theta^{\varepsilon_1^{\delta}}\setminus \Theta^{\varepsilon_2^{\delta}},\\
\delta^2\cdot \log^4(1/\delta), & x\in \Theta^{\varepsilon_2^{\delta}}.
\end{cases}
\end{align}
Note that the assumption \eqref{eps} implies that  $\Theta^{\varepsilon_2^{\delta}}\subseteq \Theta^{\varepsilon_1^{\delta}}$ for all $\delta\in (0, \delta_0]$ and hence $h^{\delta}$ is well-defined  for all $\delta\in (0, \delta_0]$. Moreover,  $h^\delta$ is continuous and it holds
\begin{equation}\label{vv2}
\delta^2\cdot \log^4(1/\delta)\leq h^{\delta}\leq \delta
\end{equation}  
for all $\delta\in (0, \delta_0]$. We add that the step size function $h^{\delta}$ we use for the adaptive quasi-Milstein scheme 
$\eul$  is an appropriate modification of the step size function used for the adaptive Euler-Maruyama scheme in~\cite{NSS19}. 

For $\delta\in (0, \delta_0]$ let $N(\eul_1)$ denote the number of evaluations of $W$ used to compute $\eul_1$, i.e. 
\[
N(\eul_1)=\min\{i\in\N\colon \tau_i^{\delta}\geq 1\}.
\]
Clearly, for all $\delta\in (0, \delta_0]$,
\[
N(\eul_1)\leq \lceil \delta^{-2}\log^{-4}(1/\delta)\rceil.
\]

 We have the following  upper bounds for  the $p$-th root of
the $p$-th mean of the maximum error of  $\eul$ on the time interval $[0,1]$
 and for the average number of evaluations of $W$ used to compute  $\eul_1$.
\begin{theorem}\label{Thm1}
Assume  ($\mu$1), ($\mu$2)  and ($\sigma$1), ($\sigma$2) and assume that $\mu$ is continuous. Let  $p\in [1,\infty)$. Then
there exists $c_1, c_2\in(0, \infty)$ such that for all $\delta\in(0,\delta_0]$, 
\begin{equation}\label{ll33}
\EE\bigl[\sup_{t\in[0,1]}|X_t-\eul_t|^p\bigr]^{1/p}\leq c_1\cdot \delta 
\end{equation}
and 
\begin{equation}\label{ll32}
\EE [N(\eul_1)]\leq c_2\cdot \delta^{-1}.
\end{equation}
\end{theorem}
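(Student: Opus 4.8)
The plan is to follow the standard strategy for adaptive strong approximation of SDEs, adapted from the Euler-Maruyama analysis in~\cite{NSS19} to the quasi-Milstein setting, treating the error and the cost estimates separately. For the cost bound \eqref{ll32}, the key observation is that the number of steps is controlled by how long the adaptive scheme spends near the singularity set $\Theta$: away from $\Theta^{\varepsilon_1^{\delta}}$ the step size equals $\delta$, so there are at most $O(\delta^{-1})$ such steps, and it suffices to show that the \emph{expected} number of steps taken inside $\Theta^{\varepsilon_1^{\delta}}$ is also $O(\delta^{-1})$. Here I would use the non-degeneracy of $\sigma$ at each $\xi_i$ together with the occupation time / local time behaviour of the diffusion $X$ (or, more elementarily, a bound on $\PP(X_t\in\Theta^{\varepsilon_1^{\delta}})$ that is linear in $\varepsilon_1^{\delta}$, obtained from the existence of a bounded transition density near $\Theta$) to show that the expected amount of \emph{time} the scheme spends in the $\varepsilon_1^\delta$-neighbourhood is $O(\varepsilon_1^{\delta})$; since the step size there is at least $\delta^2\log^4(1/\delta)$, dividing gives $O(\varepsilon_1^{\delta}/(\delta^2\log^4(1/\delta))) = O(\delta^{-1}\log^{-2}(1/\delta))$ extra steps, which is absorbed. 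One has to be a little careful to phrase this in terms of the scheme's own position $\eul_{\tau_i^{\delta}}$ rather than $X$, using \eqref{ll33} to transfer between the two; this circularity is resolved by first proving a crude version of \eqref{ll33} with a suboptimal rate, or by running the two estimates in tandem.

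For the error bound \eqref{ll33}, I would set $e_t = X_t - \eul_t$ and write the usual integral representation for $e_t$ on $[0,1]$, decomposing the difference into (i) the drift term $\int_0^t (\mu(X_s)-\mu(\eul_{\usn}))\,ds$, (ii) the diffusion term $\int_0^t (\sigma(X_s)-\sigma(\eul_{\usn}))\,dW_s$, and (iii) the Milstein correction term $\int_0^t \tfrac12 \sigma d_\sigma(\eul_{\usn})\,\bigl((W_s-W_{\usn})^2-(s-\usn)\bigr)\,dW_s$ — equivalently, one groups things so that the local one-step error is the Milstein remainder. Because $\mu$ and $\sigma$ are Lipschitz (as $\mu$ is assumed continuous here), each of (i) and (ii) splits into a Lipschitz-in-$e$ part, handled by Gronwall via Burkholder-Davis-Gundy and Hölder after conditioning, plus a "frozen-path" part $\mu(\eul_{\usn}) - \mu(X_{\usn})$-type term which is again of order $|e_{\usn}|$, and a part measuring $X_s - X_{\usn}$ over one step. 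The crux is bounding the one-step increments $\EE[\sup_{s\in(\tau_i,\tau_{i+1}]}|X_s - \eul_s|^p \mid \F_{\tau_i}]$: on steps where $\eul_{\tau_i}\notin\Theta$ the coefficients are genuinely $C^1$ with Lipschitz derivative on the relevant interval, the quasi-Milstein scheme is a true Milstein scheme there, and the local error is $O(h^{\delta}(\eul_{\tau_i})^{3/2}) = O(\delta^{3/2})$ in $L_p$, summing over $O(\delta^{-1})$ steps to $O(\delta)$; on the $O(\delta^{-1}\log^{-2}(1/\delta))$ steps that meet $\Theta$, where $\mu'$ or $\sigma'$ may jump, one only has a crude $O(h^{\delta}(\eul_{\tau_i})^{1/2})$-type local bound, but there $h^{\delta}$ is as small as $\delta^2\log^4(1/\delta)$, giving a per-step contribution that, summed, is again $o(\delta)$ — this is exactly why the three-regime step-size function \eqref{step} is calibrated the way it is.

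The main obstacle I expect is the careful bookkeeping at the singularities: one must show that the solution $X$ spends little enough time in $\Theta^{\varepsilon_1^{\delta}}$ (needed for both the cost bound and for controlling the contribution of the bad steps to the global error), and simultaneously that on a bad step the local quasi-Milstein error does not blow up even though $d_\sigma$ and $d_\mu$ are discontinuous there — this requires using that $\sigma$ is globally Lipschitz (so $\sigma d_\sigma$ is bounded) and that the probability of the Brownian path crossing a given $\xi_i$ during a short step of length $h$ is $O(\sqrt h/\mathrm{dist})$ controlled by the adaptive step-size choice. A secondary technical point is the already-mentioned mild circularity between \eqref{ll33} and \eqref{ll32}, which I would handle by first establishing a weak a priori bound $\EE[\sup_{t\in[0,1]}|\eul_t|^p]<\infty$ uniformly in $\delta$ (straightforward from \eqref{vv2}, BDG and discrete Gronwall), then the cost estimate, then feeding everything into the Gronwall argument for the sharp error rate. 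The remaining estimates — BDG, discrete Gronwall, Taylor expansion of the Lipschitz-$C^1$ coefficients with remainder in the good regions, and moment bounds for Brownian increments — are routine and I would not grind through them in detail.
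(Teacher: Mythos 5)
Your overall architecture (moment bounds for the scheme, cost via occupation near $\Theta$, error via Gronwall plus special treatment of steps that straddle $\Theta$) points in the right direction, but two of your quantitative steps fail as stated. First, the cost bound: you propose to bound the number of steps inside $\Theta^{\varepsilon_1^{\delta}}$ by dividing the occupation time $O(\varepsilon_1^{\delta})$ by the minimal step size $\delta^2\log^4(1/\delta)$ and claim this gives $O(\delta^{-1}\log^{-2}(1/\delta))$; in fact $\varepsilon_1^{\delta}/(\delta^2\log^4(1/\delta))=\sqrt{\delta}\,\log^2(1/\delta)/(\delta^2\log^4(1/\delta))=\delta^{-3/2}\log^{-2}(1/\delta)$, which is far above the target $\delta^{-1}$. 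A worst-case step size cannot be used on all of $\Theta^{\varepsilon_1^{\delta}}$: one must exploit that on $\Theta^{\varepsilon_1^{\delta}}\setminus\Theta^{\varepsilon_2^{\delta}}$ the step size is $(d(x,\Theta)/\log^2(1/\delta))^2$, write the expected cost there as $\log^4(1/\delta)\cdot\EE\bigl[\int_0^1 d(\eul_t,\Theta)^{-2}\cdot \ind_{\{\dots\}}\,dt\bigr]$ and control it by a weighted occupation-time estimate with $f(x)=1/\max(\overline{\varepsilon},x)^2$ for suitable cutoffs $\overline{\varepsilon}$ (this is what the paper does in Subsection~\ref{cost} via Lemma~\ref{occup}); only the innermost region $\Theta^{\varepsilon_2^{\delta}}$, whose occupation time is $O(\varepsilon_2^{\delta})=O(\delta\log^4(1/\delta))$, may be divided by $\delta^2\log^4(1/\delta)$. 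Moreover, the occupation-time estimates must be proved for the scheme $\eul$ itself — the paper derives them from the semimartingale local time of $\eul$ and the non-degeneracy of $\sigma$ near $\Theta$ — rather than for $X$ and then transferred through the error bound; your bootstrap to break that circularity is left entirely unspecified.

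Second, the error bound: your accounting of the bad steps is not conclusive. Even with your own count of $O(\delta^{-1}\log^{-2}(1/\delta))$ steps meeting $\Theta$ and a crude local error of order $h^{1/2}\approx\delta\log^2(1/\delta)$ per such step, a direct summation gives $O(1)$, and an $L_2$/martingale-type accumulation gives $O(\sqrt{\delta}\log^2(1/\delta))$ — neither is $o(\delta)$. The paper's mechanism is different: it reduces everything to the single estimate $\int_0^1\EE\bigl[|\eul_t-\eul_{\utn}|^2\cdot\ind_S(\eul_t,\eul_{\utn})\bigr]\,dt\le c\,\delta^2$ (Proposition~\ref{prop1}), in which the near-$\Theta$ contribution is controlled by combining the smallness of the squared increment (of order $\delta^2\log^4(1/\delta)$, since the step there is $\delta^2\log^4(1/\delta)$) with the smallness of the occupation probability of $\Theta^{\varepsilon_2^{\delta}}$ (of order $\delta\log^4(1/\delta)$) via Cauchy--Schwarz, while in the intermediate regions Gaussian tail bounds show that straddling $\Theta$ within one step has probability $O(\delta^q)$ for every $q$. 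Your sketch contains the right ingredients (tail bounds, tiny steps near $\Theta$, Taylor expansion off the singular set), but as assembled it does not deliver rate $1$; you would need either the paper's integral criterion or a genuinely worked-out replacement for the bad-step bookkeeping, plus the scheme-level occupation-time lemma that both the error and the cost estimates rely on.
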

The proof of Theorem \ref{Thm1} is postponed to Section~\ref{Proofs}.

\section{An adaptive strong order 1 method for SDEs with discontinuous drift coefficient}\label{threefour}
As in Section~\ref{QM} we consider a complete probability space
$ ( \Omega, \mathcal{F}, \PP ) $  and we assume that 
$
  W \colon [0,\infty) \times \Omega \to \R
$
is a Brownian motion
on $ ( \Omega, \mathcal{F}, \PP )$. In contrast to Section~\ref{QM} we now turn to SDEs with a drift coefficient $\mu$ that may have discontinuity points.

Let $x_0\in\R$ and let $\mu\colon\R\to\R$ and $ \sigma\colon\R\to\R$ be functions that satisfy the assumptions ($\mu$1), ($\mu$2)  and ($\sigma$1), ($\sigma$2), respectively. 
For later purposes we note that ($\mu$1) implies the existence of the one-sided limits $\mu(\xi_i-)$ and $\mu(\xi_i+)$ for all $i\in\{1,\dots,k\}$.
We consider the SDE
\begin{equation}\label{sde0}
\begin{aligned}
dX_t & = \mu(X_t) \, dt + \sigma(X_t) \, dW_t, \quad t\geq 0,\\
X_0 & = x_0,
\end{aligned}
\end{equation}
which has a unique strong solution,
see~\cite[Theorem 2.2]{LS16}.

We now constuct an adaptive method for approximating the strong solution  of the SDE \eqref{sde0} at the  time $1$.
To this end we employ
the
transformation strategy from~\cite{MGY19b}. We use that 
$X_1$ can be obtained by applying a Lipschitz continuous transformation to the strong solution of an SDE with coefficients $\widetilde\mu, \widetilde\sigma$ satisfying the assumptions ($\mu$1), ($\mu$2)  and ($\sigma$1), ($\sigma$2), respectively, such that $\widetilde\mu$ is continuous, and then we employ Theorem~\ref{Thm1}.

We start by introducing the transformation procedure from~\cite{MGY19b}. For $k\in\N$, 
\[
z\in\Tm_k=\{(z_1,\dots,z_k)\in\R^k\colon z_1<\dots<z_k\}
\]
 and $\alpha=(\alpha_1,\dots,\alpha_k)\in\R^k$ we put
\[
\rho_{z,\alpha} =  \begin{cases}
\frac{1}{8 |\alpha_1|}, & \text{if }k=1, \\
\min\bigl(\bigl\{\frac{1}{8 |\alpha_i|}\colon i\in \{1, \ldots, k\}\bigr\} \cup \bigl\{ \frac{z_i-z_{i-1}}{2}\colon i\in \{2, \ldots, k\}\bigr\} \bigr),& \text{if }k\geq 2,
\end{cases}
\]
where we use the convention $1/0 =\infty$. Let $\phi\colon\R\to\R$ be given by
\begin{equation}\label{phi}
\phi(x)=(1-x^2)^4\cdot \ind_{[-1, 1]}(x). 
\end{equation}
For all $k\in\N$, $z\in \Tm_k$, $\alpha\in\R^k$ and $\nu\in (0,\rho_{z,\alpha})$ we define a function $G_{z,\alpha,\nu}\colon\R\to\R$ by
\begin{equation}\label{fct1}
G_{z,\alpha,\nu}(x) = x+\sum_{i=1}^k \alpha_i\cdot (x-z_i)\cdot |x-z_i|\cdot \phi \Bigl(\frac{x-z_i}{\nu}\Bigr).
\end{equation}

The following two technical lemmas provide the properties of the mappings $G_{z,\alpha,\nu}$ that are crucial for our purposes. For the proofs of both lemmas see~\cite{MGY19b}.

\begin{lemma}\label{lemx1}
Let $k\in\N$, $z\in \Tm_k$, $\alpha\in\R^k$, $\nu\in (0,\rho_{z,\alpha})$ and put $z_0=-\infty$ and $z_{k+1}= \infty$. The function $G_{z,\alpha,\nu}$ has the following properties.
\begin{itemize}
\item[(i)] $G_{z,\alpha,\nu}$  is differentiable on $\R$ with a Lipschitz continuous derivative $G'_{z,\alpha,\nu}$ that satisfies $\inf_{x\in\R} G_{z,\alpha,\nu}'(x)>0$.
In particular, $G_{z,\alpha,\nu}$ has an inverse $G_{z,\alpha,\nu}^{-1}\colon \R\to \R$ that is Lipschitz continuous. 
\item[(ii)] For every $i\in\{1,\dots,k+1\}$, the function $G'_{z,\alpha,\nu}$ is  differentiable on $(z_{i-1},z_i)$ with Lipschitz continuous derivatives $G''_{z,\alpha,\nu}$.
\item[(iii)] For every $i\in\{1,\dots,k\}$ the one-sided limits  $G''_{z,\alpha,\nu}(z_i-) $ and $G''_{z,\alpha,\nu}(z_i+)$ exist and satisfy
\[
G''_{z,\alpha,\nu}(z_i-) = -2\alpha_i,\quad G''_{z,\alpha,\nu}(z_i+) = 2\alpha_i.
\]
\end{itemize}
\end{lemma}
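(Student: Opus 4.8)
The plan is to reduce all three assertions to the analysis of a single scalar bump and then to exploit that, by the choice of $\nu$, the summands in \eqref{fct1} have pairwise disjoint supports. Writing $\psi\colon\R\to\R$, $\psi(u)=u\,|u|\,\phi(u)$, and using the identity $(x-z_i)\,|x-z_i|=\nu^{2}\cdot\tfrac{x-z_i}{\nu}\cdot\bigl|\tfrac{x-z_i}{\nu}\bigr|$, one has $G_{z,\alpha,\nu}(x)=x+\sum_{i=1}^{k}g_i(x)$ with $g_i(x)=\alpha_i\,\nu^{2}\,\psi\bigl(\tfrac{x-z_i}{\nu}\bigr)$. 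Each $g_i$ is supported in $[z_i-\nu,z_i+\nu]$, and $\nu<\rho_{z,\alpha}\le\tfrac{z_i-z_{i-1}}{2}$ (for $k\ge2$) forces these intervals to be pairwise disjoint and $d(z_j,\operatorname{supp}g_i)>\nu$ for $i\neq j$; hence every point of $\R$ has a neighbourhood on which at most one $g_i$ is non-zero, and every $z_j$ has a neighbourhood on which $G_{z,\alpha,\nu}(x)=x+g_j(x)$. Next I would record the properties of $\psi$: since $\phi$ vanishes to order four at $\pm1$ it is of class $C^{3}$ on $\R$ with bounded fourth derivative, and since $\phi$ is even, $\psi$ is odd, of class $C^{1}$ on $\R$ with $\psi'$ globally Lipschitz (its second derivative is bounded on $\R$, with a single jump at $0$); moreover $\psi,\psi',\psi''$ vanish at $\pm1$, a short computation gives $\psi'(u)=2u(1-u^{2})^{3}(1-5u^{2})$ for $u\in[0,1]$ (so that $\sup_{u\in\R}|\psi'(u)|\le 8$), and $\psi''(0+)=2\phi(0)=2$, $\psi''(0-)=-2$.

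To prove (i) I would argue that on a neighbourhood of any point $G_{z,\alpha,\nu}'$ equals either $1$ or $1+g_i'$ with $g_i'(x)=\alpha_i\nu\,\psi'\bigl(\tfrac{x-z_i}{\nu}\bigr)$, so that $|G_{z,\alpha,\nu}'(x)-1|\le 8|\alpha_i|\nu<1$ by $\nu<\tfrac{1}{8|\alpha_i|}$ and hence $\inf_{x\in\R}G_{z,\alpha,\nu}'(x)\ge 1-8\max_i|\alpha_i|\nu>0$. Continuity and the Lipschitz property of $G_{z,\alpha,\nu}'$ follow from those of $\psi'$ by gluing across the gaps between the supports (where $g_i'\equiv0$ near the endpoints because $\psi'(\pm1)=0$). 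Since $G_{z,\alpha,\nu}'$ is bounded, $G_{z,\alpha,\nu}$ is Lipschitz; since in addition $\inf G_{z,\alpha,\nu}'>0$ and $G_{z,\alpha,\nu}(x)-x$ is bounded, $G_{z,\alpha,\nu}$ is a strictly increasing bijection of $\R$, and $(G_{z,\alpha,\nu}^{-1})'=1/\bigl(G_{z,\alpha,\nu}'\circ G_{z,\alpha,\nu}^{-1}\bigr)\le 1/\inf G_{z,\alpha,\nu}'$, so $G_{z,\alpha,\nu}^{-1}$ is Lipschitz.

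For (ii) and (iii), fix $i\in\{1,\dots,k+1\}$. On the open interval $(z_{i-1},z_i)$ only $g_{i-1}$ and $g_i$ can be non-zero, and there $\sgn(x-z_{i-1})\equiv1$ and $\sgn(x-z_i)\equiv-1$, so $g_{i-1}(x)=\alpha_{i-1}(x-z_{i-1})^{2}\phi\bigl(\tfrac{x-z_{i-1}}{\nu}\bigr)$ and $g_i(x)=-\alpha_i(x-z_i)^{2}\phi\bigl(\tfrac{x-z_i}{\nu}\bigr)$, each of class $C^{3}$ with bounded third derivative; hence $G_{z,\alpha,\nu}'$ is differentiable on $(z_{i-1},z_i)$ with Lipschitz continuous second derivative $G_{z,\alpha,\nu}''=g_{i-1}''+g_i''$ (the unbounded cases $i=1$ and $i=k+1$ involve only a single bump and are handled identically), which gives (ii). For (iii), near $z_i$ one has $G_{z,\alpha,\nu}(x)=x+\alpha_i\nu^{2}\psi\bigl(\tfrac{x-z_i}{\nu}\bigr)$, hence $G_{z,\alpha,\nu}''(z_i\pm)=\alpha_i\,\psi''(0\pm)=\pm2\alpha_i$.

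I expect the only genuinely delicate point to be the behaviour of $\psi$ at the origin: the factor $u\,|u|$ makes $\psi$ merely $C^{1}$ there, so one must check carefully that $\psi'$ is nonetheless globally Lipschitz in spite of the jump of $\psi''$ at $0$, and that the one-sided limits $\psi''(0\pm)=\pm2$ come out as claimed; together with the explicit bound $\sup_{\R}|\psi'|\le 8$ — which is precisely what matches the constant $\tfrac{1}{8|\alpha_i|}$ built into $\rho_{z,\alpha}$ — this is what makes the strict positivity in (i) and the jump values in (iii) work. The rest is routine bookkeeping with the disjoint supports of the $g_i$.
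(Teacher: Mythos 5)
Your proof is correct: the reduction to a single normalized bump $\psi(u)=u\,|u|\,\phi(u)$, the pairwise disjointness of the supports forced by $\nu<\rho_{z,\alpha}$, the crude bound $\sup_{\R}|\psi'|\le 8$ (from $2|u|\le 2$, $(1-u^2)^3\le 1$, $|1-5u^2|\le 4$ on $[-1,1]$) matching the constant $\tfrac{1}{8|\alpha_i|}$ in $\rho_{z,\alpha}$, and the one-sided limits $\psi''(0\pm)=\pm 2\phi(0)$ are exactly the ingredients needed for (i)--(iii). The paper itself gives no proof but refers to \cite{MGY19b}, where essentially this same direct computation (differentiating each compactly supported summand and using the definition of $\rho_{z,\alpha}$) is carried out, so your argument matches the intended one.
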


\begin{lemma}\label{transform1} Assume ($\mu$1), ($\mu$2)  and ($\sigma$1), ($\sigma$2). Put $\xi=(\xi_1,\dots,\xi_k)$, define $\alpha=(\alpha_1,\dots,\alpha_k)\in \R^k$ by
\[
\alpha_i =\frac{\mu(\xi_i-)-\mu(\xi_i+)}{2 \sigma^2(\xi_i)}
\]
 for  $i\in\{1,\dots,k\}$, and let $\nu\in (0,\rho_{\xi,\alpha})$. Consider the function $G_{\xi,\alpha,\nu}$ and extend $G''_{\xi,\alpha,\nu}\colon \cup_{i=1}^{k+1} (\xi_{i-1},\xi_i)\to \R$ to the whole real line by taking
 \[
 G''_{\xi,\alpha,\nu}(\xi_i) = 2\alpha_i + 2\,\frac{\mu(\xi_i+)-\mu(\xi_i)}{\sigma^2(\xi_i)}  
\]
for $i\in\{1, \ldots, k\}$. Then the functions
\begin{equation}\label{tildecoeff}
\widetilde \mu=(G_{\xi,\alpha,\nu}'\cdot \mu+\tfrac{1}{2}G_{\xi,\alpha,\nu}''\cdot\sigma^2)\circ G_{\xi,\alpha,\nu}^{-1} \, \text{ and }\, \widetilde\sigma=(G_{\xi,\alpha,\nu}'\cdot\sigma)\circ G_{\xi,\alpha,\nu}^{-1}
\end{equation}
satisfy the assumptions ($\mu$1), ($\mu$2)  and ($\sigma$1), ($\sigma$2), respectively,  and $\widetilde\mu$ is continuous. 
\end{lemma}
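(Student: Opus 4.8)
The plan is to take $G:=G_{\xi,\alpha,\nu}$ and $\Theta:=\{\xi_1,\dots,\xi_k\}$, to exploit that the constraint $\nu\in(0,\rho_{\xi,\alpha})$ forces the bumps appearing in \eqref{fct1} to have pairwise disjoint supports, and then to transport the piecewise regularity of $\mu$ and $\sigma$ through the bi-Lipschitz map $G^{-1}$; the only genuinely substantive point is the continuity of $\widetilde\mu$.

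First I would record the elementary consequences of $\nu<\rho_{\xi,\alpha}$. Since $\rho_{\xi,\alpha}\le(\xi_i-\xi_{i-1})/2$ for every $i\ge2$, we have $|\xi_i-\xi_j|>\nu$ whenever $i\ne j$, so the supports of $x\mapsto\phi((x-\xi_i)/\nu)$, $i\in\{1,\dots,k\}$, are pairwise disjoint and contained in the compact set $K:=[\xi_1-\nu,\xi_k+\nu]$, and $G(x)=x$ for $x\notin K$. Evaluating \eqref{fct1} and its derivative at $\xi_j$, every summand with $i\ne j$ vanishes together with its first derivative (as $\phi,\phi'$ vanish off $[-1,1]$), while the $i=j$ summand and its first derivative vanish at $\xi_j$ since $(x-\xi_j)|x-\xi_j|$ has a double zero there; hence $G(\xi_j)=\xi_j$ and $G'(\xi_j)=1$ for every $j$. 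By Lemma~\ref{lemx1}(i) $G$ is a strictly increasing homeomorphism of $\R$, so $G$ and $G^{-1}$ map each $(\xi_{i-1},\xi_i)$ onto itself; accordingly I would prove ($\mu$1), ($\mu$2), ($\sigma$1), ($\sigma$2) for $\widetilde\mu,\widetilde\sigma$ with the same breakpoints $\xi_0,\dots,\xi_{k+1}$.

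Next I would transport regularity. By Lemma~\ref{lemx1}(i), $G'$ is Lipschitz on $\R$, bounded (it equals $1$ off $K$) and bounded below by a positive constant, and $G^{-1}$ is Lipschitz; hence $G'\sigma$ is Lipschitz on $\R$ (it equals $\sigma$ off $K$ and is a product of bounded Lipschitz functions near $K$), so $\widetilde\sigma=(G'\sigma)\circ G^{-1}$ is Lipschitz on $\R$, and $\widetilde\sigma(\xi_i)=G'(\xi_i)\sigma(\xi_i)=\sigma(\xi_i)\neq0$, which gives ($\sigma$1). For the piecewise statements I would fix $i$ and work on $(\xi_{i-1},\xi_i)$: no $\xi_j$ lies there, so each summand of \eqref{fct1} is on that interval the product of the smooth function $x\mapsto(x-\xi_j)|x-\xi_j|$ with $x\mapsto\phi((x-\xi_j)/\nu)$, and since $\phi$ in \eqref{phi} is $C^3$ with Lipschitz third derivative one gets $G\in C^3((\xi_{i-1},\xi_i))$ with $G'''$ bounded and Lipschitz there, hence (using $\inf G'>0$) $G^{-1}\in C^2((\xi_{i-1},\xi_i))$ with $(G^{-1})'=1/(G'\circ G^{-1})$ bounded and Lipschitz. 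Writing $f:=G'\mu+\tfrac12 G''\sigma^2$ on $\R$ with the extension of $G''$ to $\Theta$ prescribed in the statement, ($\mu$1), ($\mu$2), ($\sigma$1), ($\sigma$2) for $\mu,\sigma$ together with the boundedness of $\mu,\mu',\sigma,\sigma',G',G'',G'''$ on $K$ show that $f$ and $f'$ are Lipschitz on $(\xi_{i-1},\xi_i)$ (for an unbounded piece one splits into its intersection with a neighbourhood of $K$ and the remaining part, where $G=\mathrm{id}$ and $f=\mu$). It follows that $\widetilde\sigma$ has a Lipschitz derivative on $(\xi_{i-1},\xi_i)$, i.e.\ ($\sigma$2), and that $\widetilde\mu=f\circ G^{-1}$ is Lipschitz on $(\xi_{i-1},\xi_i)$ with Lipschitz derivative $(f'\circ G^{-1})\cdot(G^{-1})'$ there, i.e.\ ($\mu$1) and ($\mu$2).

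Finally, for the continuity of $\widetilde\mu$ — the step I expect to be the main obstacle — I would note that since $G^{-1}$ is a homeomorphism of $\R$ fixing each $\xi_i$, $\widetilde\mu=f\circ G^{-1}$ is continuous on $\R$ iff $f$ is, and $f$ is already continuous on $\R\setminus\Theta$, so it suffices to check $f(\xi_i-)=f(\xi_i)=f(\xi_i+)$ for each $i$. As $G',\sigma$ are continuous and $G'(\xi_i)=1$, Lemma~\ref{lemx1}(iii) gives $f(\xi_i\pm)=\mu(\xi_i\pm)\pm\alpha_i\sigma^2(\xi_i)$, and inserting $\alpha_i=(\mu(\xi_i-)-\mu(\xi_i+))/(2\sigma^2(\xi_i))$ — here $\sigma(\xi_i)\neq0$ enters — yields $f(\xi_i-)=f(\xi_i+)=\tfrac12(\mu(\xi_i-)+\mu(\xi_i+))$; the prescribed value $G''(\xi_i)=2\alpha_i+2(\mu(\xi_i+)-\mu(\xi_i))/\sigma^2(\xi_i)$ gives $f(\xi_i)=\mu(\xi_i)+\alpha_i\sigma^2(\xi_i)+\mu(\xi_i+)-\mu(\xi_i)=\tfrac12(\mu(\xi_i-)+\mu(\xi_i+))$ as well. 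Hence $f$, and therefore $\widetilde\mu$, is continuous on $\R$, which together with the previous step completes the proof. The heart of the matter is precisely this cancellation: $\alpha_i$ and the extension $G''(\xi_i)$ are chosen exactly so that, via $G''(\xi_i\pm)=\pm2\alpha_i$ and $G'(\xi_i)=1$, the jump of $\tfrac12 G''\sigma^2$ across $\xi_i$ cancels that of $G'\mu$; everything else is a routine transfer of regularity through a bi-Lipschitz, piecewise-$C^3$ change of variables which is the identity near $\pm\infty$.
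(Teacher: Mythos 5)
Your proposal is correct, and it reconstructs exactly the intended argument: the paper itself gives no proof of Lemma~\ref{transform1} but defers to~\cite{MGY19b}, where the transformation is designed precisely so that, via $G''_{\xi,\alpha,\nu}(\xi_i\pm)=\pm2\alpha_i$, $G'_{\xi,\alpha,\nu}(\xi_i)=1$ and the choice of $\alpha_i$, the jump of $\tfrac12 G''_{\xi,\alpha,\nu}\sigma^2$ cancels that of $G'_{\xi,\alpha,\nu}\mu$, the remaining piecewise regularity being transferred through the bi-Lipschitz, piecewise smooth change of variables exactly as you do. Your jump computation $f(\xi_i\pm)=\tfrac12(\mu(\xi_i-)+\mu(\xi_i+))=f(\xi_i)$ and the routine regularity transfer (using that $G_{\xi,\alpha,\nu}$ fixes each $\xi_i$, is the identity off a compact set, and is $C^3$ with Lipschitz third derivative on each $(\xi_{i-1},\xi_i)$) are sound, so there is no gap to report.
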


We turn to the transformation of the SDE~\eqref{sde0}. Take $\xi,\alpha,\nu$ as in Lemma~\ref{transform1} and define a stochastic process $Z\colon [0,\infty)\times \Omega\to \R$ by  
\begin{equation}\label{tr}
Z_t = G_{\xi,\alpha,\nu}(X_t),\quad t\geq 0.
\end{equation}
Then the process $Z$  is the unique strong solution of the SDE
\begin{equation}\label{sde1}
\begin{aligned}
dZ_t & = \widetilde\mu(Z_t) \, dt + \widetilde\sigma(Z_t) \, dW_t, \quad t\geq 0,\\
Z_0 & = G_{\xi,\alpha,\nu}(x_0)
\end{aligned}
\end{equation}
with $\widetilde \mu$ and $\widetilde \sigma$ given by~\eqref{tildecoeff}, see~\cite{MGY19b}.
For every $\delta\in(0,\delta_0]$ we use $\eultr=(\eultr_t)_{t\geq 0}$ to denote the 
    time-continuous adaptive
quasi-Milstein  scheme \eqref{mil1}, \eqref{mil2} associated to the SDE \eqref{sde1}, i.e. $\eultr$ is defined  recursively by
\begin{equation}\label{m1}
\tau_0^{\delta}=0, \quad \eultr_{\tau_0^{\delta}}=G_{\xi,\alpha,\nu}(x_0)
\end{equation}
and
\begin{equation}\label{m2}
\begin{aligned}
\tau_{i+1}^{\delta}&=\tau_i^{\delta}+h^{\delta}(\eultr_{\tau_i^{\delta}}),\\
\eultr_{t}&=\eultr_{\tau_i^{\delta}}+\widetilde\mu(\eultr_{\tau_i^{\delta}})\cdot (t-\tau_i^{\delta})+\widetilde\sigma(\eultr_{\tau_i^{\delta}})\cdot (W_t-W_{\tau_i^{\delta}})\\
&\qquad\qquad+\frac{1}{2}\widetilde\sigma d_{\widetilde\sigma} (\eultr_{\tau_i^{\delta}})\cdot\bigl((W_t-W_{\tau_i^{\delta}})^2-(t-\tau_i^{\delta})\bigr), \quad t\in (\tau_i^{\delta},\tau_{i+1}^{\delta}],
\end{aligned}
\end{equation}
for $i\in\N_0$, where the step size function $h^{\delta}$ is given by \eqref{step}.

 We approximate $X$ by the stochastic process  $\eul=(\eul_{t})_{t\geq 0}$ with  $\eul_t= G_{\xi,\alpha,\nu}^{-1}(\eultr_{t})$, $t\geq 0$. For  $\delta\in (0, \delta_0]$ let $N(\eul_1)$ denote the number of evaluations of $W$ used to compute  $\eul_1$. We have the following  upper bounds for  the $p$-th root of
the $p$-th mean of the maximum error of  $\eul$ on the time interval $[0,1]$
 and for the average number of evaluations of $W$ used to compute  $\eul_1$.

\begin{theorem}\label{Thm2} 
Assume ($\mu$1), ($\mu$2)  and ($\sigma$1), ($\sigma$2). Let  $p\in [1,\infty)$. Then
there exists $c_1, c_2\in(0, \infty)$ such that for all $\delta\in(0,\delta_0]$, 
\begin{equation}\label{Ll33}
\EE\bigl[\sup_{t\in[0,1]}|X_t-\eul_t|^p\bigr]^{1/p}\leq c_1\cdot \delta 
\end{equation}
and 
\begin{equation}\label{Ll32}
\EE [N(\eul_1)]\leq c_2\cdot \delta^{-1}.
\end{equation}
\end{theorem}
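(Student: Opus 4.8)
The plan is to deduce Theorem~\ref{Thm2} from Theorem~\ref{Thm1} by a change of variables, using that everything on the $Z$-side has already been set up for exactly this purpose. First I would fix $\xi=(\xi_1,\dots,\xi_k)$, $\alpha$ and $\nu\in(0,\rho_{\xi,\alpha})$ as in Lemma~\ref{transform1} and recall from that lemma that the coefficients $\widetilde\mu$ and $\widetilde\sigma$ of the transformed SDE~\eqref{sde1} satisfy ($\mu$1), ($\mu$2) and ($\sigma$1), ($\sigma$2) (with kink set $\{G_{\xi,\alpha,\nu}(\xi_1),\dots,G_{\xi,\alpha,\nu}(\xi_k)\}$ in place of $\Theta$) and that $\widetilde\mu$ is moreover continuous, hence Lipschitz continuous on $\R$. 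Thus the hypotheses of Theorem~\ref{Thm1} hold for the SDE~\eqref{sde1}, and since $\eultr$ is by construction precisely the time-continuous adaptive quasi-Milstein scheme \eqref{mil1}, \eqref{mil2} associated to~\eqref{sde1}, Theorem~\ref{Thm1} applied to~\eqref{sde1} yields constants $\widetilde c_1,\widetilde c_2\in(0,\infty)$ such that for all $\delta\in(0,\delta_0]$,
\[
\EE\bigl[\sup_{t\in[0,1]}|Z_t-\eultr_t|^p\bigr]^{1/p}\leq \widetilde c_1\cdot\delta,\qquad \EE[N(\eultr_1)]\leq \widetilde c_2\cdot\delta^{-1}.
\]
Here I would take care to note that the whole apparatus of Section~\ref{QM} (the set $\Theta$, the thresholds $\varepsilon_1^\delta,\varepsilon_2^\delta$, the constants $\varepsilon_0,\delta_0$, the step size function $h^\delta$) is to be instantiated with respect to $\widetilde\mu,\widetilde\sigma$ and their kink set, which is legitimate exactly because of Lemma~\ref{transform1}.

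Next I would transfer the error bound back to $X$. By~\eqref{tr} we have $X_t=G_{\xi,\alpha,\nu}^{-1}(Z_t)$ and by definition $\eul_t=G_{\xi,\alpha,\nu}^{-1}(\eultr_t)$ for all $t\geq 0$. By Lemma~\ref{lemx1}(i) the inverse $G_{\xi,\alpha,\nu}^{-1}$ is Lipschitz continuous; let $L\in(0,\infty)$ be a Lipschitz constant for it. Then for every $t\in[0,1]$,
\[
|X_t-\eul_t|=|G_{\xi,\alpha,\nu}^{-1}(Z_t)-G_{\xi,\alpha,\nu}^{-1}(\eultr_t)|\leq L\cdot|Z_t-\eultr_t|,
\]
so that $\sup_{t\in[0,1]}|X_t-\eul_t|\leq L\cdot\sup_{t\in[0,1]}|Z_t-\eultr_t|$; taking $p$-th moments and using the first displayed bound gives~\eqref{Ll33} with $c_1=L\widetilde c_1$. (If one also wants the integrability~\eqref{mom} for $X$, it follows in the same way from the Lipschitz property of $G_{\xi,\alpha,\nu}^{-1}$ and~\eqref{mom} for $Z$.)

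Finally, for the cost bound I would observe that computing $\eul_1=G_{\xi,\alpha,\nu}^{-1}(\eultr_1)$ requires exactly the same evaluations of the Brownian motion $W$ as computing $\eultr_1$, since applying the deterministic map $G_{\xi,\alpha,\nu}^{-1}$ does not involve $W$; hence $N(\eul_1)=N(\eultr_1)$ and~\eqref{Ll32} follows immediately from the second displayed bound with $c_2=\widetilde c_2$.

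The argument is essentially a routine change of variables, and I do not expect a serious obstacle: the only delicate point is bookkeeping, namely making sure that Theorem~\ref{Thm1} and the constructions of Section~\ref{QM} are invoked for the transformed SDE~\eqref{sde1} and its coefficients $\widetilde\mu,\widetilde\sigma$, and that the verification of ($\mu$1), ($\mu$2), ($\sigma$1), ($\sigma$2) and of the continuity of $\widetilde\mu$ is quoted from Lemma~\ref{transform1} rather than reproved. All the genuine analytic work (the error and cost estimates for the adaptive quasi-Milstein scheme with Lipschitz coefficients) is encapsulated in Theorem~\ref{Thm1}, whose proof is deferred to Section~\ref{Proofs}.
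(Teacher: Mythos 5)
Your proposal is correct and follows essentially the same route as the paper's own proof: apply Theorem~\ref{Thm1} to the transformed SDE~\eqref{sde1} (its coefficients satisfying the hypotheses by Lemma~\ref{transform1}), transfer the error estimate back via the Lipschitz continuity of $G_{\xi,\alpha,\nu}^{-1}$ from Lemma~\ref{lemx1}(i), and obtain the cost bound from $N(\eul_1)=N(\eultr_1)$. Your extra bookkeeping remark that the quantities of Section~\ref{QM} are instantiated for $\widetilde\mu,\widetilde\sigma$ and their kink set is a correct and welcome clarification of what the paper leaves implicit.
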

\begin{proof}
Using the Lipschitz continuity of $G_{\xi,\alpha,\nu}^{-1}$, 
see Lemma~\ref{lemx1}(i),
 the fact that $\widetilde \mu$ and $\widetilde \sigma$ satisfy the assumptions ($\mu$1), ($\mu$2)  and ($\sigma$1), ($\sigma$2), respectively, and that $\widetilde \mu$ is continuous as well as the estimate \eqref{ll33} in Theorem~\ref{Thm1} we obtain that there exist $c_1, c_2\in (0, \infty)$ such that for all $\delta\in(0,\delta_0]$,
\begin{align*}
\EE\bigl[\sup_{t\in[0,1]}|X_t-\eul_t|^p\bigr]^{1/p}=\EE\bigl[\sup_{t\in[0,1]}|X_t-G_{\xi,\alpha,\nu}^{-1}(\eultr_t)|^p\bigr]^{1/p}\leq c_1\cdot \EE\bigl[\sup_{t\in[0,1]}|Z_t- \eultr_t|^p\bigr]^{1/p}\leq c_2\cdot\delta.
\end{align*}
Thus, \eqref{Ll33} holds. The estimate \eqref{Ll32} follows from the fact that $N(\eul_1)=N(\eultr_1)$ and the estimate \eqref{ll32} in Theorem~\ref{Thm1}.
\end{proof}

\section{Proof of  Theorem~\ref{Thm1}}\label{Proofs}
Throughout this section we 
assume that $\mu$ and $\sigma$ satisfy 
($\mu$1), ($\mu$2)  and ($\sigma$1), ($\sigma$2), respectively, and that $\mu$ is continuous. Moreover, for  $\delta\in(0,\delta_0]$ and $t\in [0,1]$ we put 
\[
\utn = \max\{\tau_i^{\delta}\colon i\in \N_0,  \tau_i^{\delta}\leq t\}.
\] 
We first briefly describe the structure of the proof of the error estimate \eqref{ll33} in Theorem~\ref{Thm1} and the relation of our analysis and the error analysis of the equidistant quasi-Milstein scheme in~\cite{MGY19b}.  Let ${\widehat X}^{\delta, eq}=({\widehat X}^{\delta, eq}_t)_{t\geq 0}$ denote the equidistant quasi-Milstein scheme with step size $\delta$, i.e. ${\widehat X}^{\delta, eq}$ is defined in the same way as $\eul$ in \eqref{mil2}, but with $h^\delta=\delta$ in place of \eqref{step}. 
For simplicity let us restrict to the case $p=2$.  In~\cite{MGY19b} it is shown that there exists $c\in(0, \infty)$ such that for all $\delta\in\{1/n\colon n\in\N\}$,
\begin{equation}\label{sk1}
\EE\bigl[\sup_{t\in[0,1]}|X_t-{\widehat X}^{\delta, eq}_t|^2\bigr]^{1/2}
 \le c\cdot \delta + c\cdot \Bigl(\int_0^1 \EE\bigl[ |{\widehat X}^{\delta, eq}_t-{\widehat X}^{\delta, eq}_{\utn}|^2\cdot \ind_S ({\widehat X}^{\delta, eq}_t,{\widehat X}^{\delta, eq}_{\utn})\bigr]\, dt\Bigr)^{1/2},
\end{equation}
where 
\[
S= \Bigl(\bigcup_{i=1}^{k+1} (\xi_{i-1},\xi_i)^2\Bigr)^c
\]
is the set of pairs
$(x,y)$ in $\R^2$, which do not allow 
for a joint Lipschitz estimate of $ |d_\mu(x) -d_\mu(y)|$ or of $ |d_\sigma(x) -d_\sigma(y)|$ if $\mu$ or $\sigma$ is not differentiable at one of the points $\xi_1, \ldots, \xi_k$. Transforming the condition $({\widehat X}^{\delta, eq}_t,{\widehat X}^{\delta, eq}_{\utn})\in S$  into a condition solely on the sizes of  the random variables $|\eul_{\utn-(t-\utn)} - \xi_i|$,
$|\eul_{\utn-(t-\utn)} -\eul_{\utn}|$ and $|\eul_{\utn}-\eul_{t}|$, where $\xi_i$ lies between  ${\widehat X}^{\delta, eq}_t$ and ${\widehat X}^{\delta, eq}_{\utn}$, and employing a Markov-type property of  ${\widehat X}^{\delta, eq}$ 
and  occupation time estimates for ${\widehat X}^{\delta, eq}$  it is  shown in~\cite{MGY19b} that
there exists  $c\in (0,\infty)$ such that for all $\delta\in\{1/n\colon n\in\N\}$,
\begin{equation}\label{sk3}
\int_0^1 \EE\bigl[ |{\widehat X}^{\delta, eq}_t-{\widehat X}^{\delta, eq}_{\utn}|^2\cdot \ind_S ({\widehat X}^{\delta, eq}_t,{\widehat X}^{\delta, eq}_{\utn})\bigr]\, dt\leq c\cdot\delta^{3/2}.
\end{equation}
Combining \eqref{sk1} and~\eqref{sk3} yields
the rate of convergence $3/4$ 
for the root mean square of the maximum error of the equidistant quasi-Milstein scheme ${\widehat X}^{\delta, eq}$ on the time interval $[0,1]$.

Our proof of \eqref{ll33} reproduces the estimate~\eqref{sk1}.  
Proceeding similarly to~\cite[Subsection 5.3]{MGY19b} we show that 
there exists $c\in(0, \infty)$ such that for all $\delta\in(0, \delta_0]$ the adaptive quasi-Milstein scheme $\eul$ satisfies
\begin{equation}\label{sk4}
\EE\bigl[\sup_{t\in[0,1]}|X_t-\eul_t|^2\bigr]^{1/2}
 \le c\cdot \delta + c\cdot \Bigl(\int_0^1 \EE\bigl[|\eul_t-\eul_{\utn}|^2\cdot \ind_S (\eul_t,\eul_{\utn})\bigr]\, dt\Bigr)^{1/2}.
\end{equation}
However, we obtain a much better upper bound for the integral on the right hand side of \eqref{sk4} than the upper bound $c\cdot \delta^{3/2}$ in \eqref{sk3}  in the case of the equidistant  quasi-Milstein scheme $\widehat X^{\delta, eq}$. More precisely, we show that  there exists $c\in(0, \infty)$ such that for all $\delta\in(0, \delta_0]$,
\begin{equation}\label{sk5}
\int_0^1 \EE\bigl[ |\eul_t-\eul_{\utn}|^2\cdot \ind_S (\eul_t,\eul_{\utn})\bigr]\, dt\leq c\cdot\delta^{2},
\end{equation}
which jointly with \eqref{sk4} yields  the error estimate \eqref{ll33}. 
For the proof of \eqref{sk5} we split the integral on the left hand side  of \eqref{sk5}  into four terms   
using the identities 
\[
1=1_{(\Theta^{\varepsilon_0})^c}(\eul_{\utn})+1_{\Theta^{\varepsilon_0}\setminus \Theta^{\varepsilon_1^{\delta}}}(\eul_{\utn})+1_{\Theta^{\varepsilon_1^{\delta}}\setminus \Theta^{\varepsilon_2^{\delta}}}(\eul_{\utn})+ 1_{\Theta^{\varepsilon_2^{\delta}}}(\eul_{\utn}), \quad t\in [0, 1], 
\]
and prove the upper bound $c\cdot\delta^{2}$ for each of the resulting terms 
employing uniform $L_p$-estimates of  $\eul$, appropriate upper bounds  for the probabilities that the increments $|\eul_t-\eul_{\utn}|$  are  large 
compared to the distance of  $\eul_{\utn}$ from the set $\Theta$ as well as estimates for the expected value of certain occupation time functionals of $\eul$.
We add that for the proof of \eqref{sk5} it is crucial that the adaptive quasi-Milstein scheme $\eul$ uses smaller step sizes when it is close to the discontinuity points of $\mu$.

For the proof of the  estimate \eqref{ll32} we proceed similarly to the cost analysis of the  adaptive Euler-Maruyama scheme in~\cite[Section 5]{NS19}. 

We briefly describe the 
structure
of this section. 
In Section~\ref{4.0} we provide  properties of the random times $\tau_i^{\delta}$ and $\utn$ that are crucial for our proofs.
In Section~\ref{4.1} we prove $L_p$-estimates of the adaptive quasi-Milstein scheme
 $\eul$. Section~\ref{4.2} contains  estimates for the expected value of occupation time functionals of $\eul$ as well as  estimates for the probabilities that the increments $|\eul_t-\eul_{\utn}|$ of the adaptive quasi-Milstein scheme are  large 
compared to the distance of the actual value of the scheme $\eul_{\utn}$ from the set $\Theta$, which finally lead to the proof of the estimate \eqref{sk5},
see Proposition~\ref{prop1}.  The results in Sections~\ref{4.1} and~\ref{4.2} are then used in Section~\ref{4.3} to derive the error estimate  \eqref{ll33} in Theorem~\ref{Thm1}. Section \ref{cost} is devoted to the proof of the estimate  \eqref{ll32} in Theorem~\ref{Thm1}.

Throughout the following we will employ the following facts, which are  an immediate consequence of the assumptions  
($\mu$1), ($\mu$2)  and ($\sigma$1), ($\sigma$2) and the assumption that $\mu$ is continuous. Namely, the function $\mu$ is Lipschitz continuous on $\R$,
the functions $\mu$ and $\sigma$ satisfy a linear growth condition, i.e. 
\begin{equation}\label{LG}
\exists\, K\in (0, \infty)\,\forall\, x\in\R\colon\quad |\mu(x)|+|\sigma(x)|\leq K\cdot (1+|x|),
\end{equation}
the functions  $d_\mu$ and $d_\sigma$ are bounded, i.e. 
\begin{equation}\label{bound}
\|d_\mu\|_\infty + \|d_\sigma\|_\infty < \infty,
\end{equation}
and it holds
\begin{equation}\label{taylor}
\begin{aligned}
& \exists\, c\in(0, \infty)\,\forall\,f\in\{\mu, \sigma\} \,\forall\, 
i\in\{1,\dots,k+1\}\,
\forall x,y\in
 (\xi_{i-1},\xi_{i})
\colon \\
& \qquad \qquad\qquad |f(y)-f(x)-f'(x)(y-x)|  \le c\cdot |y-x|^2.
\end{aligned}
\end{equation}

\subsection{Properties of the random times $\tau_i^{\delta}$ and $\utn$.}\label{4.0}
Let $(\mathcal F_t)_{t\geq 0}$ denote the augmentation of the filtration generated by $W$, i.e. for all $t\geq 0$,
\[
\mathcal F_t= \sigma\bigl(\sigma(\{W_s\colon s\in [0,t]\})\cup \mathcal N\bigr),
\]
where $\mathcal N = \{N\in \mathcal F\colon \PP(N)=0\}$. 
For a stopping time $\tau\colon\Omega\to[0,\infty)$ let $\mathcal F_{\tau}$ denote the $\sigma$-algebra of $\tau$-past, i.e.
\[
\mathcal F_{\tau}=\{A\in\mathcal F\colon   A\cap \{\tau\leq t\}\in \mathcal F_t \text{ for all } t\geq 0\}.
\]
Moreover, for a random time $\tau\colon\Omega\to[0,\infty)$  define a stochastic process $W^{\tau}\colon [0, \infty)\times\Omega\to\R$ by
\[
W^{\tau}_t=W_{\tau+t}-W_{\tau}, \quad t\geq 0.
\]
The following two  lemmas provide the properties of the random times $\tau_i^{\delta}$ and $\utn$ that are crucial for our proofs.
\begin{lemma}\label{Stime} Let $\delta\in(0,\delta_0]$. Then for all $i\in\N_0$, 
\begin{itemize}
\item[(i)] $\tau_i^{\delta}$ is a stopping time and $\eul_{\tau_i^{\delta}}$ is $\mathcal F_{\tau_i^{\delta}}/ \mathcal B(\R)$-measurable,
\item[(ii)] $\tau_{i+1}^{\delta}$ is 
 $\mathcal F_{\tau_i^{\delta}}/ \mathcal B([0, \infty))$-measurable,
\item[(iii)] $W^{\tau_i^{\delta}}$ is a Brownian motion and independent of $\mathcal F_{\tau_i^{\delta}}$
\end{itemize}
and 
\begin{itemize}
\item[(iv)] $\tau_i^{\delta}\wedge 1$ is a stopping time and $\eul_{\tau_i^{\delta}\wedge 1}$ is $\mathcal F_{\tau_i^{\delta}\wedge 1}/ \mathcal B(\R)$-measurable,
\item[(v)] $\tau_{i+1}^{\delta}\wedge 1$ is 
 $\mathcal F_{\tau_i^{\delta}\wedge 1}/ \mathcal B([0, \infty))$-measurable,
\item[(vi)] $W^{\tau_i^{\delta}\wedge 1}$ is a Brownian motion and independent of $\mathcal F_{\tau_i^{\delta}\wedge 1}$.
\end{itemize}
\end{lemma}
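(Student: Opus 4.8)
The plan is to prove all six assertions simultaneously by induction on $i\in\N_0$, using three standard facts about the augmented Brownian filtration $(\mathcal F_t)_{t\geq0}$: (a) if $\tau$ is a stopping time and $Y$ is an $\mathcal F_\tau$-measurable random variable with $Y\geq\tau$, then $Y$ is a stopping time, since $\{Y\leq t\}=\{Y\leq t\}\cap\{\tau\leq t\}\in\mathcal F_t$; (b) if $\tau$ is an a.s.\ finite stopping time and $M$ is a continuous adapted process, then $M_\tau$ is $\mathcal F_\tau$-measurable; and (c) the strong Markov property of Brownian motion, namely that for every a.s.\ finite stopping time $\tau$ the process $W^\tau$ is a Brownian motion independent of $\mathcal F_\tau$. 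I would also use that every stopping time $\tau$ is $\mathcal F_\tau$-measurable, that $\tau\wedge1$ is a stopping time with $\mathcal F_{\tau\wedge1}\subseteq\mathcal F_\tau$, and that $h^\delta$ from \eqref{step} is continuous, hence Borel measurable. A preliminary observation is that by \eqref{vv2} one has $0<h^\delta\leq\delta$, so $\tau_i^\delta\leq i\delta<\infty$ for every $i$ and $\tau_i^\delta\to\infty$ as $i\to\infty$; in particular $\eul$ is well defined on $[0,\infty)$ via \eqref{mil1}, \eqref{mil2}, it is continuous (evaluating the $i$-th defining formula at $t=\tau_i^\delta$ returns $\eul_{\tau_i^\delta}$), and once (i) is known for all $i$ it is adapted.

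For the base case $i=0$ everything is immediate: $\tau_0^\delta=0$ and $\tau_1^\delta=h^\delta(x_0)$ are deterministic, $\eul_{\tau_0^\delta}=x_0$ is $\mathcal F_0$-measurable, $W^0=W$ is a Brownian motion independent of $\mathcal F_0$, and the stopped versions (iv)--(vi) follow the same way since $\tau_0^\delta\wedge1=0$. For the induction step, assuming (i)--(iii) at level $i$, I would first note that $\tau_{i+1}^\delta=\tau_i^\delta+h^\delta(\eul_{\tau_i^\delta})$ is $\mathcal F_{\tau_i^\delta}$-measurable by (ii) at level $i$, hence a stopping time by fact (a) because $\tau_{i+1}^\delta\geq\tau_i^\delta$. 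Evaluating \eqref{mil2} at $t=\tau_{i+1}^\delta$ writes $\eul_{\tau_{i+1}^\delta}$ as a Borel function of $\eul_{\tau_i^\delta}$, of $h^\delta(\eul_{\tau_i^\delta})=\tau_{i+1}^\delta-\tau_i^\delta$, of $W_{\tau_i^\delta}$ and of $W_{\tau_{i+1}^\delta}$; the first two are $\mathcal F_{\tau_i^\delta}\subseteq\mathcal F_{\tau_{i+1}^\delta}$-measurable by the induction hypothesis, and the two Brownian values are $\mathcal F_{\tau_{i+1}^\delta}$-measurable by fact (b) applied to the continuous adapted process $W$, so (i) holds at level $i+1$. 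Then $\tau_{i+2}^\delta=\tau_{i+1}^\delta+h^\delta(\eul_{\tau_{i+1}^\delta})$ is $\mathcal F_{\tau_{i+1}^\delta}$-measurable (using that $\tau_{i+1}^\delta$ is measurable with respect to its own $\sigma$-algebra), which is (ii) at level $i+1$, and (iii) at level $i+1$ is fact (c) applied to the a.s.\ finite stopping time $\tau_{i+1}^\delta$.

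It remains to treat (iv)--(vi). Items (iv) and (vi) follow directly from facts (b) and (c) applied to the bounded stopping time $\tau_i^\delta\wedge1$ and to the continuous adapted processes $\eul$ and $W$, while (v) follows from the identity $\tau_{i+1}^\delta\wedge1=\bigl(\tau_i^\delta\wedge1+h^\delta(\eul_{\tau_i^\delta\wedge1})\bigr)\wedge1$ together with (iv) at level $i$: this identity is checked by splitting on $\{\tau_i^\delta<1\}$, where both sides equal $(\tau_i^\delta+h^\delta(\eul_{\tau_i^\delta}))\wedge1$, and on $\{\tau_i^\delta\geq1\}$, where both sides equal $1$ (using $\tau_{i+1}^\delta\geq\tau_i^\delta\geq1$ and $1+h^\delta(\eul_1)>1$); the right-hand side is then a Borel function of the $\mathcal F_{\tau_i^\delta\wedge1}$-measurable variables $\tau_i^\delta\wedge1$ and $\eul_{\tau_i^\delta\wedge1}$. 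The proof is essentially a bookkeeping induction with no serious obstacle; the only non-elementary inputs are the strong Markov property of $W$ and the $\mathcal F_\tau$-measurability of a continuous adapted process sampled at a stopping time, and the one place that needs mild care is the case distinction in the identity for $\tau_{i+1}^\delta\wedge1$ used for (v), where one must distinguish whether $\tau_i^\delta$ has already passed the terminal time $1$.
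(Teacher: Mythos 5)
Your proposal is correct and follows essentially the same route as the paper: induction on $i$ using that $h^\delta(\eul_{\tau_i^\delta})$ is an $\mathcal F_{\tau_i^\delta}$-measurable increment, that an $\mathcal F_\tau$-measurable random variable dominating the stopping time $\tau$ is again a stopping time, the measurability of a continuous adapted process sampled at a (finite) stopping time, and the strong Markov property; for (iv)--(vi) the paper likewise relies on the stopped recursion $\tau_{i+1}^\delta\wedge1=(\tau_i^\delta\wedge1+h^\delta(\eul_{\tau_i^\delta\wedge1}))\wedge1$. Your only deviation is cosmetic: you obtain (iv) and (vi) directly from the general sampling and strong Markov facts at $\tau_i^\delta\wedge1$ instead of rerunning the full induction for the stopped times, which is a harmless simplification.
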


\begin{proof}
We prove (i) by induction on $i\in\N_0$. Clearly, (i) holds for $i=0$. Next, assume that (i) holds  for some $i\in\N_0$.  Then using the definition \eqref{mil2} of $\tau_{i+1}^{\delta}$ we conclude that $\tau_{i+1}^{\delta}$ is $\mathcal F_{\tau_i^{\delta}}/ \mathcal B([0,\infty))$-measurable and $\tau_{i+1}^{\delta}\geq \tau_{i}^{\delta}$. Applying~\cite[Exercise 1.2.14]{ks91} we thus obtain that $\tau_{i+1}^{\delta}$ is a stopping time. This in particular yields that
 $W_{\tau_{i+1}^{\delta}}$ is $\mathcal F_{\tau_{i+1}^{\delta}}/ \mathcal B(\R)$-measurable and $W_{\tau_{i}^{\delta}}$ is $\mathcal F_{\tau_{i}^{\delta}}/ \mathcal B(\R)$-measurable. Thus, using the fact that $\mathcal F_{\tau_{i}^{\delta}}\subset \mathcal F_{\tau_{i+1}^{\delta}}$ as well as the induction assumption we  obtain from  the definition \eqref{mil2} of $\eul_{\tau_{i+1}^{\delta}}$ 
that $\eul_{\tau_{i+1}^{\delta}}$ is $\mathcal F_{\tau_{i+1}^{\delta}}/ \mathcal B(\R)$-measurable.
The  definition \eqref{mil2} of $\tau_{i+1}^{\delta}$ and (i) imply (ii). The strong Markov property of $W$ yields (iii). 

For the proof of (iv)-(vi) put
\[
s_i^\delta=\tau_i^{\delta}\wedge 1, \quad i\in\N_0,
\]
observe that $s_0^{\delta}=0, \eul_{s_0^{\delta}}=x_0$ 
and
\begin{align*}
s_{i+1}^{\delta}&= (s_i^{\delta}+h^{\delta}(\eul_{s_i^{\delta}}))\wedge 1,\\
\eul_{s_{i+1}^{\delta}}&=\eul_{s_i^{\delta}}+\mu(\eul_{s_i^{\delta}})\cdot (s_{i+1}^{\delta}-s_i^{\delta})+\sigma(\eul_{s_i^{\delta}})\cdot (W_{s_{i+1}^{\delta}}-W_{s_i^{\delta}})\\
&\qquad\qquad+\frac{1}{2}\sigma d_\sigma (\eul_{s_i^{\delta}})\cdot\bigl((W_{s_{i+1}^{\delta}}-W_{s_i^{\delta}})^2-(s_{i+1}^{\delta}-s_i^{\delta})\bigr)
\end{align*}
for $i\in\N_0$
and proceed similarly to the proof of (i)-(iii).

\end{proof}

\begin{lemma}\label{Stime1} Let $\delta\in(0,\delta_0]$ and  $t\in[0, \infty)$. Then $W^{\utn}$ is a Brownian motion and independent of $\eul_{\utn}$.
\end{lemma}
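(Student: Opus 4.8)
The plan is to decompose according to the value of $\utn$, which almost surely takes values in the countable set $\{\tau_i^{\delta}\colon i\in\N_0\}$, and to reduce the statement to Lemma~\ref{Stime}(i)--(iii). First I would record that $\utn$ is well defined and that, almost surely, $\sum_{i\in\N_0}1_{A_i}=1$, where $A_i=\{\utn=\tau_i^{\delta}\}$: indeed $\tau_0^{\delta}=0\le t$, the sequence $(\tau_i^{\delta})_{i\in\N_0}$ is nondecreasing because $h^{\delta}>0$, and $\tau_i^{\delta}\ge i\cdot\delta^2\log^4(1/\delta)\to\infty$ by \eqref{vv2}, so the maximum defining $\utn$ is attained. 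Next I would check that $A_i\in\F_{\tau_i^{\delta}}$: since $\tau_i^{\delta}$ is a stopping time (Lemma~\ref{Stime}(i)) we have $\{\tau_i^{\delta}\le t\}\in\F_{\tau_i^{\delta}}$, and since $\tau_{i+1}^{\delta}$ is $\F_{\tau_i^{\delta}}$-measurable (Lemma~\ref{Stime}(ii)) we have $\{\tau_{i+1}^{\delta}>t\}\in\F_{\tau_i^{\delta}}$; monotonicity of $(\tau_j^{\delta})_j$ then gives $A_i=\{\tau_i^{\delta}\le t\}\cap\{\tau_{i+1}^{\delta}>t\}$, whence $A_i\in\F_{\tau_i^{\delta}}$. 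On $A_i$ we moreover have $\eul_{\utn}=\eul_{\tau_i^{\delta}}$ and $W^{\utn}=W^{\tau_i^{\delta}}$.

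With these preparations, since $W^{\utn}$ has continuous paths it suffices, by a standard generating-class argument on the path space, to show that for every $n\in\N$, all $0\le u_1<\dots<u_n$, and all bounded measurable $f\colon\R^n\to\R$ and $g\colon\R\to\R$,
\[
\EE\bigl[f(W^{\utn}_{u_1},\dots,W^{\utn}_{u_n})\,g(\eul_{\utn})\bigr]=\EE\bigl[f(B_{u_1},\dots,B_{u_n})\bigr]\cdot\EE\bigl[g(\eul_{\utn})\bigr],
\]
where $B$ denotes a standard Brownian motion; taking $g\equiv1$ this gives that $W^{\utn}$ is a Brownian motion, and for general $f,g$ it gives independence of $W^{\utn}$ and $\eul_{\utn}$, so both assertions follow at once.

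To prove this identity I would decompose over the $A_i$. Using $\eul_{\utn}=\eul_{\tau_i^{\delta}}$ and $W^{\utn}=W^{\tau_i^{\delta}}$ on $A_i$ and the fact that $f,g$ are bounded (so that the interchange of sum and expectation is justified by dominated convergence together with $\sum_i1_{A_i}=1$ a.s.), the left-hand side equals $\sum_{i\in\N_0}\EE[f(W^{\tau_i^{\delta}}_{u_1},\dots,W^{\tau_i^{\delta}}_{u_n})\cdot g(\eul_{\tau_i^{\delta}})1_{A_i}]$. For fixed $i$, the random variable $g(\eul_{\tau_i^{\delta}})1_{A_i}$ is bounded and $\F_{\tau_i^{\delta}}$-measurable by Lemma~\ref{Stime}(i) and the first paragraph, while $W^{\tau_i^{\delta}}$ is a Brownian motion independent of $\F_{\tau_i^{\delta}}$ by Lemma~\ref{Stime}(iii); hence the $i$-th summand factorizes as $\EE[f(B_{u_1},\dots,B_{u_n})]\cdot\EE[g(\eul_{\tau_i^{\delta}})1_{A_i}]$. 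Summing over $i$ and using $\sum_i\EE[g(\eul_{\tau_i^{\delta}})1_{A_i}]=\EE[\sum_i g(\eul_{\utn})1_{A_i}]=\EE[g(\eul_{\utn})]$ yields the claim.

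The only genuinely delicate point is the measurability bookkeeping in the first paragraph, in particular establishing $A_i\in\F_{\tau_i^{\delta}}$ so that Lemma~\ref{Stime}(iii) is applicable on each piece; once that is in place, everything reduces to the independence already contained in Lemma~\ref{Stime}, and no further probabilistic input is needed.
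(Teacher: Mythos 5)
Your proof is correct and follows essentially the same route as the paper: you decompose over the events $\{\tau_i^{\delta}\le t<\tau_{i+1}^{\delta}\}$, use Lemma~\ref{Stime}(i)--(iii) to get $\F_{\tau_i^{\delta}}$-measurability of each piece and independence of $W^{\tau_i^{\delta}}$ from $\F_{\tau_i^{\delta}}$, factorize term by term and resum. The only cosmetic differences are that you work with finite-dimensional cylinder functions plus a generating-class argument instead of path-space events, and that you make explicit the measurability of $\{\tau_i^{\delta}\le t\}\cap\{\tau_{i+1}^{\delta}>t\}$, which the paper uses implicitly.
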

\begin{proof}
Clearly, $W^{\utn}$ is continuous. Employing Lemma \ref{Stime}(i),(ii),(iii) we obtain that for all $A\in \mathcal B(C([0, \infty);\R))$,
\begin{align*}
\PP(W^{\utn}\in A)&=\sum_{i=0}^{\infty}\PP(W^{\tau_i^{\delta}}\in A,\,\tau_i^{\delta}\leq t<\tau_{i+1}^{\delta})=\sum_{i=0}^{\infty}\PP(W^{\tau_i^{\delta}}\in A)\cdot \PP( \tau_i^{\delta}\leq t<\tau_{i+1}^{\delta})
=\PP(W\in A). 
\end{align*}
Thus, $W^{\utn}$ is a Brownian motion. Applying the latter fact as well as Lemma \ref{Stime}(i),(ii),(iii) we conclude that for all $A\in\mathcal B(C([0, \infty);\R))$ and all $B\in\mathcal B(\R)$,
\begin{align*}
\PP(W^{\utn}\in A,\,\eul_{\utn}\in B)&=\sum_{i=0}^{\infty}\PP(W^{\tau_i^{\delta}}\in A,\,\eul_{\tau_i^{\delta}}\in B,\, \tau_i^{\delta}\leq t<\tau_{i+1}^{\delta})\\
&=\sum_{i=0}^{\infty}\PP(W^{\tau_i^{\delta}}\in A)\cdot \PP( \eul_{\tau_i^{\delta}}\in B,\,\tau_i^{\delta}\leq t<\tau_{i+1}^{\delta})\\
&=\PP(W^{\utn}\in A)\cdot \sum_{i=0}^{\infty}\PP(\eul_{\tau_i^{\delta}}\in B,\, \tau_i^{\delta}\leq t<\tau_{i+1}^{\delta})\\
&=\PP(W^{\utn}\in A)\cdot \PP(\eul_{\utn}\in B),
\end{align*}
which shows that $W^{\utn}$  and  $\eul_{\utn}$ are independent and completes the proof of the lemma.
\end{proof}

\subsection{$L_p$ estimates  of the adaptive quasi-Milstein scheme}\label{4.1}

Using Lemma \ref{Stime}(i) one can show in a straightforward way that for all  $\delta\in(0,\delta_0]$ and all $t\in[0, \infty)$, 
\begin{equation}\label{intrep}
\eul_{t}=x_0+\int_0^t \mu(\eul_{\usn})\, ds+\int_0^t\bigl(\sigma(\eul_{\usn})+\sigma d_\sigma(\eul_{\usn})\cdot (W_s-W_{\usn})\bigr)\,dW_s \qquad \PP\text{-a.s.}
\end{equation}

Employing \eqref{intrep} we obtain the following uniform $L_p$-estimates for $\eul$, $\delta\in(0,\delta_0]$.

\begin{lemma}\label{eulprop}
Let $p\in[1, \infty)$. Then there exists  $c\in(0, \infty)$ such that for all  $\delta\in(0,\delta_0]$, 
\begin{equation}\label{mpr1}
 \EE\bigl[\sup_{t\in[0, 1]}  |\eul_t|^p\bigr]^{1/p}\leq c.
\end{equation}
Moreover, there exists  $c\in(0, \infty)$ such that for all  $\delta\in(0,\delta_0]$, all $\Delta\in[0,1]$ and all $t\in[0, 1-\Delta]$,
\begin{equation}\label{mpr2}
\EE\bigl[ \sup_{s\in[t, t+\Delta]}  |\eul_{s}-\eul_{t}|^p\bigr]^{1/p}\leq c\cdot \sqrt{\Delta}.
\end{equation}
\end{lemma}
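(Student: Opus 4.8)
The plan is to derive both estimates \eqref{mpr1} and \eqref{mpr2} from the integral representation \eqref{intrep} together with the Burkholder--Davis--Gundy inequality, the linear growth bound \eqref{LG}, the boundedness \eqref{bound} of $d_\sigma$, and Gr\"onwall's lemma, exactly along the lines of the classical moment estimates for the Euler--Maruyama scheme, the only new feature being that the evaluation times $\usn$ are now random. First I would fix $p\in[1,\infty)$ and, without loss of generality, assume $p\ge 2$ (the case $p<2$ follows by Jensen's inequality / monotonicity of $L_p$-norms on a probability space). Applying $\sup_{t\le u}$ and the $L_p$-triangle inequality to \eqref{intrep}, the drift term is handled by H\"older in $s$, and the stochastic integral term by BDG: writing the integrand as $\sigma(\eul_{\usn})+\sigma d_\sigma(\eul_{\usn})\cdot(W_s-W_{\usn})$, BDG gives a bound by the $L_{p/2}$-norm of $\int_0^u\bigl(\sigma(\eul_{\usn})+\sigma d_\sigma(\eul_{\usn})(W_s-W_{\usn})\bigr)^2\,ds$. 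Here I use $\|\sigma d_\sigma\|_\infty<\infty$ from \eqref{bound}, the linear growth \eqref{LG}, and the elementary fact that $|W_s-W_{\usn}|\le\sup_{r\le s}|W_r-W_{\urn}|$; since consecutive evaluation times differ by at most $h^\delta\le\delta\le1$, on $[0,1]$ there are at most $\lceil\delta^{-2}\log^{-4}(1/\delta)\rceil$ of them, but more cleanly one bounds $\sup_{s\in[0,1]}|W_s-W_{\usn}|$ in $L_p$ by a constant independent of $\delta$ using a standard modulus-of-continuity / maximal-inequality argument over dyadic mesh refinements, or simply by $2\sup_{s\in[0,1]}|W_s|$ plus the bound $|W_s - W_{\usn}|\le \sup_{|u-v|\le\delta,\,u,v\in[0,1]}|W_u-W_v|$ which has bounded $L_p$-norm. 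Feeding these estimates in yields
\[
\EE\bigl[\sup_{t\in[0,u]}|\eul_t|^p\bigr]\le c + c\int_0^u\EE\bigl[\sup_{r\in[0,s]}|\eul_r|^p\bigr]\,ds
\]
for a constant $c$ independent of $\delta$ and $u\in[0,1]$, and Gr\"onwall's lemma gives \eqref{mpr1}. (One should first argue the left-hand side is finite, e.g. by localizing at the stopping times $\tau_i^\delta\wedge 1$ using Lemma~\ref{Stime}(iv)--(vi), or by noting $\eul$ is piecewise an explicit polynomial in Brownian increments on a deterministic-in-number grid, so all moments are finite a priori; then Gr\"onwall is legitimate.)

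For \eqref{mpr2} I would apply the same two ingredients to the difference $\eul_{s}-\eul_{t}=\int_t^s\mu(\eul_{\uun})\,du+\int_t^s\bigl(\sigma(\eul_{\uun})+\sigma d_\sigma(\eul_{\uun})(W_u-W_{\uun})\bigr)\,dW_u$ for $s\in[t,t+\Delta]$. Taking $\sup_{s\in[t,t+\Delta]}$ and $L_p$-norms, the drift contributes at most $\Delta\cdot\EE[\sup_{u\in[0,1]}|\mu(\eul_u)|^p]^{1/p}$, which by \eqref{LG} and \eqref{mpr1} is $\le c\Delta\le c\sqrt\Delta$ since $\Delta\le1$; BDG applied to the martingale part contributes at most $c\bigl(\EE\bigl[\bigl(\int_t^{t+\Delta}(\sigma(\eul_{\uun})+\sigma d_\sigma(\eul_{\uun})(W_u-W_{\uun}))^2\,du\bigr)^{p/2}\bigr]\bigr)^{1/p}$, and by \eqref{bound}, \eqref{LG}, \eqref{mpr1}, the bounded $L_p$-norm of $\sup_{u\in[0,1]}|W_u-W_{\uun}|$, and H\"older in $u$ this is $\le c\sqrt\Delta$, uniformly in $\delta$, $\Delta$ and $t$. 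Adding the two contributions gives \eqref{mpr2}.

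The main obstacle — really the only point requiring care beyond bookkeeping — is that $\usn$ is a random time, so $\eul_{\usn}$ is not adapted in the naive sense one would want when invoking It\^o isometry / BDG for the representation \eqref{intrep}: one must make sure the integrands $s\mapsto\mu(\eul_{\usn})$ and $s\mapsto\sigma(\eul_{\usn})+\sigma d_\sigma(\eul_{\usn})(W_s-W_{\usn})$ are genuinely $(\F_s)$-progressively measurable. This is exactly what Lemma~\ref{Stime} is for: on the event $\{\tau_i^\delta\le s<\tau_{i+1}^\delta\}\in\F_s$ one has $\usn=\tau_i^\delta$, $\eul_{\usn}=\eul_{\tau_i^\delta}$ is $\F_{\tau_i^\delta}\subseteq\F_s$-measurable, and $W_s-W_{\usn}$ is a continuous adapted process there; summing over $i$ shows the integrands are adapted with left-continuous (hence progressively measurable) paths, so \eqref{intrep} and the stochastic-integral estimates are justified. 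Once this measurability/adaptedness point is settled, the remainder is the routine BDG--Gr\"onwall computation sketched above, and the uniformity in $\delta$ is automatic because every constant that enters ($K$ from \eqref{LG}, $\|d_\sigma\|_\infty$, the BDG constant, the $L_p$-bound on $\sup_{s\in[0,1]}|W_s-W_{\usn}|\le\sup_{|u-v|\le\delta}|W_u-W_v|$) is independent of $\delta\in(0,\delta_0]$.
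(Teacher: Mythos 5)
There is a genuine gap in your argument for \eqref{mpr1}, and it sits exactly at the point the paper's proof is designed to handle. First, your appeal to ``$\|\sigma d_\sigma\|_\infty<\infty$ from \eqref{bound}'' is incorrect: \eqref{bound} only gives boundedness of $d_\sigma$ (and $d_\mu$), while $\sigma$ itself merely has linear growth, so the Milstein correction coefficient satisfies $|\sigma d_\sigma(\eul_{\usn})|\le c\,(1+|\eul_{\usn}|)$ and is \emph{not} bounded. Consequently, after BDG and H\"older in $s$ the integrand produces the cross term $\EE\bigl[(1+|\eul_{\usn}|^p)\cdot|W_s-W_{\usn}|^p\bigr]$, a product of the unbounded scheme value with the Brownian increment. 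Your proposed remedy --- bounding $\sup_{s\in[0,1]}|W_s-W_{\usn}|$ in $L_p$ by a constant via a modulus-of-continuity/maximal inequality --- does not close the argument: since that supremum is not independent of $\eul_{\usn}$, separating the expectation of the product requires H\"older/Cauchy--Schwarz, which replaces $\EE[|\eul_{\usn}|^p]$ by $\EE[|\eul_{\usn}|^{2p}]^{1/2}$ (or a higher moment), and then the Gronwall inequality for the $p$-th moment no longer closes; bootstrapping over $p$ is circular because the higher moment is precisely what is being proved. The paper resolves this with Lemma~\ref{Stime1}: since $s-\usn\le h^\delta(\eul_{\usn})\le\delta$ and $W^{\usn}$ is a Brownian motion \emph{independent} of $\eul_{\usn}$ (a strong Markov argument at the stopping times $\tau_i^\delta$, using that $\tau_{i+1}^\delta$ is $\mathcal F_{\tau_i^\delta}$-measurable), the cross term factorizes as in \eqref{w5}, $\EE[(1+|\eul_{\usn}|^p)\cdot|W_s-W_{\usn}|^p]\le \EE[1+|\eul_{\usn}|^p]\cdot\EE[\sup_{u\le\delta}|W_u|^p]$, keeping the $p$-th moment and letting Gronwall go through. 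You cite Lemma~\ref{Stime} only for progressive measurability of the integrands in \eqref{intrep} (which is indeed needed and correctly argued), but you never use the independence, which is the actual crux.

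The remaining ingredients of your sketch are sound and match the paper's route: the a priori finiteness of $\EE[\sup_{t\le1}|\eul_t|^p]$ before Gronwall (you localize or count the at most $\lceil\delta^{-2}\log^{-4}(1/\delta)\rceil$ update times, the paper proves $\EE[|\eul_{\tau_i^\delta}|^p]<\infty$ by induction), and the reduction of \eqref{mpr2} to the same BDG/H\"older estimates. Note that for \eqref{mpr2} your approach can be repaired a posteriori: once \eqref{mpr1} is known for all exponents, the cross term there may be handled by Cauchy--Schwarz with the $2p$-th moment, since no Gronwall loop is involved; but for \eqref{mpr1} itself you need the factorization via Lemma~\ref{Stime1} (or an equivalent conditioning at the $\tau_i^\delta$), and as written your proof does not supply it.
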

\begin{proof}

We first show that  for all $\delta\in(0,\delta_0]$ and  all $i\in\N_0$,
\begin{equation}\label{w1}
\EE\bigl[|\eul_{\tau_i^{\delta}}|^p\bigr]<\infty.
\end{equation}
Let $\delta\in(0,\delta_0]$. We prove \eqref{w1} by induction on $i\in\N_0$. Clearly, \eqref{w1} holds for $i=0$. Next, assume that \eqref{w1} holds for some $i\in \N_0$. By \eqref{mil2}, \eqref{LG} and \eqref{bound} there exist $c_1, c_2\in(0, \infty)$ such that
\begin{align*}
|\eul_{\tau_{i+1}^{\delta}}|^p&\leq c_1\cdot \bigl(|\eul_{\tau_i^{\delta}}|^p+|\mu(\eul_{\tau_i^{\delta}})|^p\cdot \delta^p+|\sigma(\eul_{\tau_i^{\delta}})|^p\cdot |W_{\tau_{i+1}^{\delta}}-W_{\tau_i^{\delta}}|^p\\
&\qquad\qquad+\frac{1}{2}|\sigma d_\sigma (\eul_{\tau_i^{\delta}})|^p\cdot(|W_{\tau_{i+1}^{\delta}}-W_{\tau_i^{\delta}}|^{2p}+\delta^p)\bigr)\\
&\leq c_2\cdot (1+|\eul_{\tau_i^{\delta}}|^p)\cdot (1+\sup_{t\in[0, \delta]}|W_t^{\tau_{i}^{\delta}}|^{2p}+\sup_{t\in[0, \delta]}|W_t^{\tau_{i}^{\delta}}|^{p}).
\end{align*}
Using the independence of $\eul_{\tau_i^{\delta}}$ and $W^{\tau_{i}^{\delta}}$, the fact that $W^{\tau_{i}^{\delta}}$ is a Brownian motion as well as the induction assumption we therefore conclude that 
\begin{align*}
\EE\bigl[|\eul_{\tau_{i+1}^{\delta}}|^p\bigr]
&\leq c_2\cdot (1+\EE\bigl[|\eul_{\tau_i^{\delta}}|^p\bigr])\cdot (1+\EE\bigl[\sup_{t\in[0, \delta]}|W_t^{\tau_{i}^{\delta}}|^{2p}\bigr]+\EE\bigl[\sup_{t\in[0, \delta]}|W_t^{\tau_{i}^{\delta}}|^{p}\bigr])<\infty,
\end{align*}
 which completes the proof of \eqref{w1}.

 For $\delta\in(0,\delta_0]$ put
\begin{equation}\label{ndelta}
n^{\delta}=\lceil\delta^{-2}\log^{-4}(1/\delta)\rceil.
\end{equation}
It follows from \eqref{w1} that for all $\delta\in(0,\delta_0]$,
\begin{equation}\label{n1}
\sup_{t\in[0,1]} \EE\bigl[|\eul_{\utn}|^p\bigr]=\sup_{t\in[0,1]} \sum_{i=0}^{n^{\delta}}\EE\bigl[|\eul_{\tau_i^{\delta}}|^p\cdot 1_{\{\utn=\tau_i^{\delta}\}}\bigr]\leq\sum_{i=0}^{n^{\delta}}\EE\bigl[|\eul_{\tau_i^{\delta}}|^p\bigr]<\infty.
\end{equation}

We next prove \eqref{mpr1}. By \eqref{intrep}, for all $\delta\in(0,\delta_0]$ and  all $t\in[0,1]$,
\begin{align*}
\EE\bigl[\sup_{s\in[0, t]} |\eul_{s}|^p\bigr]\leq 3^p\cdot |x_0|^p&+3^p\cdot\EE\Bigl[\Bigl|\int_0^t |\mu(\eul_{\uun})|\, du\Bigr|^p\Bigr]\\
&+3^p\cdot\EE\Bigl[\sup_{s\in[0, t]}\Bigl|\int_0^s\bigl(\sigma(\eul_{\uun})+\sigma\cdot d_{\sigma} (\eul_{\uun})\cdot (W_u-W_{\uun})\bigr)\,dW_u\Bigr|^p\Bigr].
\end{align*}
Using the H\"older inequality, the Burkholder-Davis-Gundy inequality,
\eqref{LG} and  \eqref{bound}  we conclude that there exists $c\in(0, \infty)$ such that  for all $\delta\in(0,\delta_0]$ and  all $t\in[0,1]$,
\begin{equation}\label{w6}
\EE\bigl[\sup_{s\in[0, t]} |\eul_{s}|^p\bigr]\leq c+c\cdot \int_0^t\EE\bigl[|\eul_{\uun}|^p\bigr]\, du+c\cdot \int_0^t\EE\bigl[(1+|\eul_{\uun}|^p)\cdot |W_u-W_{\uun}|^p\bigr]\, du.
\end{equation}
Lemma \ref{Stime1} implies that there exists $c\in(0, \infty)$ such that for all $\delta\in(0,\delta_0]$ and  all $u\in[0,1]$,
\begin{equation}\label{w5}
\begin{aligned}
\EE\bigl[(1+|\eul_{\uun}|^p)\cdot |W_u-W_{\uun}|^p\bigr]&\leq \EE\bigl[(1+|\eul_{\uun}|^p)\cdot \sup_{s\in[0, \delta]}|W^{\uun}_s|^p\bigr]\\
&= \EE\bigl[(1+|\eul_{\uun}|^p)\bigr]\cdot \EE\bigl[\sup_{s\in[0, \delta]}|W_s|^p\bigr]\leq c\cdot \EE\bigl[1+|\eul_{\uun}|^p\bigr].
\end{aligned}
\end{equation}
Combining \eqref{w6} and \eqref{w5} we conclude that there exists $c\in(0, \infty)$ such that  for all $\delta\in(0,\delta_0]$ and  all $t\in[0,1]$,
\begin{equation}\label{n3}
\EE\bigl[\sup_{s\in[0, t]} |\eul_{s}|^p\bigr]\leq c+c\cdot \int_0^t\EE\bigl[|\eul_{\uun}|^p\bigr]\, du.
\end{equation}
Employing \eqref{n1} we therefore obtain that for all $\delta\in(0,\delta_0]$,
\begin{equation}\label{n2}
\EE\bigl[\sup_{s\in[0, 1]} |\eul_{s}|^p\bigr]<\infty.
\end{equation}
Moreover, by \eqref{n3},  for all $\delta\in(0,\delta_0]$ and  all $t\in[0,1]$,
\[
\EE\bigl[\sup_{s\in[0, t]} |\eul_{s}|^p\bigr]\leq c+c\cdot \int_0^t\EE\bigl[\sup_{u\in[0, s]}|\eul_{u}|^p\bigr]\, ds.
\]
Applying the Gronwall inequality completes the proof of \eqref{mpr1}.

For the proof of \eqref{mpr2} observe that for all $\delta\in(0,\delta_0]$, all $\Delta\in[0,1]$ and all $t\in[0, 1-\Delta]$,
\begin{align*}
\EE\bigl[ \sup_{s\in[t, t+\Delta]}  |\eul_{s}-\eul_{t}|^p\bigr]&\leq  2^p\cdot\EE\Bigl[\Bigl|\int_t^{t+\Delta} |\mu(\eul_{\uun})|\, du\Bigr|^p\Bigr]\\&\quad+2^p\cdot\EE\Bigl[\sup_{s\in[t, t+\Delta]}\Bigl|\int_t^s\bigl(\sigma(\eul_{\uun})+\sigma\cdot d_{\sigma} (\eul_{\uun})\cdot (W_u-W_{\uun})\bigr)\,dW_u\Bigr|^p\Bigr]
\end{align*}
and employ the H\"older inequality, the Burkholder-Davis-Gundy inequality,
\eqref{LG},  \eqref{bound}, \eqref{w5} and (i).
\end{proof}

\subsection{Occupation time  estimates for the adaptive quasi-Milstein scheme}\label{4.2} We first provide an estimate for the expected value of occupation time functionals of $\eul$. 
\begin{lemma}\label{occup}
Let $f\colon [0, \infty)\to [0, \infty)$ be  $\mathcal B([0, \infty))/\mathcal B([0, \infty))$-measurable and let $\gamma>0$. Then  there exists $c\in (0, \infty)$ such that for all $\delta\in(0, \delta_0]$ and all $\varepsilon\in(0, \varepsilon_0]$,
\[
\EE\Bigl[\int_0^1 f(d(\eul_t, \Theta))\cdot 1_{\Theta^\varepsilon}(\eul_t)\, dt\Bigr]\leq c\cdot\int_0^\varepsilon f(x)\,dx+c \cdot \sup_{x\in[0, \varepsilon]} f(x) \cdot(\varepsilon^{\frac{3}{2}-\gamma}+\delta^{\frac{3}{2}-\gamma}\bigr).
\]
\end{lemma}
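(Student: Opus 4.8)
The plan is to condition on the most recent grid time and reduce the estimate to a one-dimensional occupation-time bound for the explicit one-step update of $\eul$. First, by Tonelli's theorem and the fact that for $\varepsilon\le\varepsilon_0$ the set $\Theta^\varepsilon$ is the disjoint union of the intervals $(\xi_i-\varepsilon,\xi_i+\varepsilon)$, $i\in\{1,\dots,k\}$, it suffices to bound $\int_0^1\EE\bigl[f(|\eul_t-\xi_i|)\,1_{\{|\eul_t-\xi_i|<\varepsilon\}}\bigr]\,dt$ for a single $i$. Fixing $t$ and writing, for $x\in\R$, $s>0$ and $N$ a standard normal random variable, $\Psi^x_s(w)=x+\mu(x)s+\sigma(x)w+\tfrac12\sigma d_\sigma(x)(w^2-s)$ and $G_i(x,s)=\EE\bigl[f(|\Psi^x_s(\sqrt s\,N)-\xi_i|)\,1_{\{|\Psi^x_s(\sqrt s\,N)-\xi_i|<\varepsilon\}}\bigr]$, one uses that $\{\tau_j^\delta\le t<\tau_{j+1}^\delta\}\in\mathcal F_{\tau_j^\delta}$ and that on this event $\eul_t=\Psi^{\eul_{\tau_j^\delta}}_{t-\tau_j^\delta}(W^{\tau_j^\delta}_{t-\tau_j^\delta})$ with $W^{\tau_j^\delta}$ a Brownian motion independent of $\mathcal F_{\tau_j^\delta}$ (Lemma~\ref{Stime}), to obtain
\[
\EE\bigl[f(|\eul_t-\xi_i|)\,1_{\{|\eul_t-\xi_i|<\varepsilon\}}\bigr]=\sum_{j\ge 0}\EE\bigl[G_i(\eul_{\tau_j^\delta},t-\tau_j^\delta)\,1_{\{\tau_j^\delta\le t<\tau_{j+1}^\delta\}}\bigr]=\EE\bigl[G_i(\eul_{\utn},t-\utn)\bigr].
\]

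The core of the argument is a pointwise bound for $G_i(x,s)$, uniform in $x\in\R$ and $0<s\le\delta_0$: I would show that there is $c\in(0,\infty)$, not depending on $\varepsilon,\delta$, with
\[
G_i(x,s)\ \le\ \frac{c}{\sqrt s}\int_{\xi_i-\varepsilon}^{\xi_i+\varepsilon}f(|y-\xi_i|)\,e^{-(y-x)^2/(cs)}\,dy\ +\ c\,\sup_{r\in[0,\varepsilon]}f(r)\cdot\Bigl(e^{-1/(cs)}+1_{\{|x|>1/(cs)\}}\Bigr).
\]
For $x$ with $d(x,\Theta)<\varepsilon_0$ one has $|\sigma(x)|$ bounded below (shrinking $\varepsilon_0$ if necessary, using ($\sigma$1) and continuity of $\sigma$), and on the event $\{\sqrt s\,|N|\le C_0\}$ for a suitably small constant $C_0$, \eqref{bound} makes $w\mapsto\Psi^x_s(w)$ a strictly increasing $C^1$-diffeomorphism with derivative bounded below; the change-of-variables formula then bounds the density of $\Psi^x_s(\sqrt s\,N)$ there by $c/\sqrt s$ times the Gaussian factor $e^{-(y-x)^2/(cs)}$, which after restriction to $(\xi_i-\varepsilon,\xi_i+\varepsilon)$ yields the first term. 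The remaining contributions — $\{\sqrt s\,|N|>C_0\}$, the case $d(x,\Theta)\ge\varepsilon_0$ (where landing in $\Theta^\varepsilon$ forces $|\Psi^x_s(\sqrt s\,N)-x|\ge\varepsilon_0/2$, hence $\sqrt s\,|N|$ atypically large unless $|x|$ is large), and the case $|x|>1/(cs)$ — are bounded by Gaussian tail estimates and give the $\sup f$-terms; throughout, the Milstein correction $\tfrac12\sigma d_\sigma(x)(w^2-s)$ is a harmless perturbation since $\|\sigma d_\sigma\|_\infty<\infty$ and $s\le\delta_0$.

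It then remains to insert this bound into $\sum_{i=1}^k\int_0^1\EE[G_i(\eul_{\utn},t-\utn)]\,dt$. For the first term, swapping the order of integration reduces matters to the estimate
\[
\sup_{y\in\R}\ \int_0^1\EE\Bigl[\frac{1}{\sqrt{t-\utn}}\,e^{-(y-\eul_{\utn})^2/(c(t-\utn))}\Bigr]\,dt\ \le\ c,
\]
after which the first term is at most $c\int_{\xi_i-\varepsilon}^{\xi_i+\varepsilon}f(|y-\xi_i|)\,dy=2c\int_0^\varepsilon f(r)\,dr$. To prove the displayed estimate I would decompose $[0,1]$ into the cells $[\tau_j^\delta\wedge 1,\tau_{j+1}^\delta\wedge 1)$, use the elementary inequality $\int_0^h u^{-1/2}e^{-a^2/(cu)}\,du\le C\sqrt h\,e^{-a^2/(c'h)}$ to bound the contribution of the $j$-th cell by $C\sqrt{h^\delta(\eul_{\tau_j^\delta})}\,e^{-(y-\eul_{\tau_j^\delta})^2/(c'h^\delta(\eul_{\tau_j^\delta}))}$, and then estimate the expected sum using the uniform moment bound \eqref{mpr1} and the increment bound \eqref{mpr2}; here it is essential that by \eqref{step} the step size near $\Theta$ decreases quadratically with the distance to $\Theta$, so that the Gaussian window of radius $\asymp\sqrt{t-\utn}$ around $y$ is never crossed in too few steps and the sum reconstructs a bounded occupation density rather than overcounting it. For the $\sup f$-terms one uses \eqref{vv2} (so $t-\utn\ge\delta^2\log^4(1/\delta)$) together with Markov's inequality and \eqref{mpr1} applied on $\{|\eul_{\utn}|>1/(c(t-\utn))\}$, which produces a constant times $\sup f$ times a positive power of $\delta$ with logarithmic corrections, bounded by $\sup f\cdot\delta^{3/2-\gamma}$ for $\delta\le\delta_0$; the complementary $\sup f\cdot\varepsilon^{3/2-\gamma}$ term arises from the analogous but cruder estimate for the regimes in which $\eul$ enters $\Theta^\varepsilon$ within a single step from distance $\ge\varepsilon$, where one pays an extra power of $\varepsilon$ coming again from the small step sizes of $\eul$ in the shells $\Theta^{\varepsilon_1^\delta}\setminus\Theta^{\varepsilon_2^\delta}$. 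I expect the last step — the occupation-density bound with the clean constant (no logarithmic loss) — to be the main obstacle, since it requires exploiting the precise form of $h^\delta$ together with a fine-scale control of how much time $\eul$ spends at each distance from $\Theta$, which in turn must be extracted from the recursive definition \eqref{mil2} and the increment estimates of Lemma~\ref{eulprop}.
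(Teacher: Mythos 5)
Your reduction to the one-step kernel (conditioning on $\utn$ via Lemma~\ref{Stime} and writing the left-hand side as $\sum_i\int_0^1\EE[G_i(\eul_{\utn},t-\utn)]\,dt$) is sound, and the pointwise Gaussian bound for $G_i(x,s)$ is plausible where $\sigma$ is bounded away from $0$ (the quadratic Milstein map is monotone with derivative bounded below on $\{\sqrt s|N|\le C_0\}$). But the argument then stands or falls with the displayed estimate
\[
\sup_{y\in\R}\ \int_0^1\EE\Bigl[\tfrac{1}{\sqrt{t-\utn}}\,e^{-(y-\eul_{\utn})^2/(c(t-\utn))}\Bigr]\,dt\ \le\ c ,
\]
and this is exactly where the proof has a genuine gap. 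After summing over the cells $[\tau_j^\delta\wedge1,\tau_{j+1}^\delta\wedge1)$ you need, in effect, that the expected time the scheme spends within distance $\sqrt{h^\delta(\eul_{\tau_j^\delta})}$ of an arbitrary point $y$ is of order $\sqrt{h^\delta}$, i.e.\ a uniform occupation-\emph{density} bound for $\eul$ at the fine scales $\sqrt{h^\delta}\asymp d(\cdot,\Theta)/\log^2(1/\delta)$ or $\delta\log^2(1/\delta)$. Such a bound cannot be extracted from \eqref{mpr1} and \eqref{mpr2}: moment and increment estimates are compatible with the scheme hovering near $y$ for a macroscopic amount of time, and the small steps prescribed by \eqref{step} near $\Theta$ increase the number of terms in the sum rather than helping. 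In fact this unproved estimate is essentially the lemma itself applied to a Gaussian bump of width $\sqrt{h^\delta}$, so the plan is circular at its core; you flag it yourself as ``the main obstacle,'' which is an acknowledgement that the proof is incomplete. (Two smaller points: you cannot ``shrink $\varepsilon_0$,'' since $\varepsilon_0$ and $\delta_0$ are fixed by the construction — you must instead treat the shell where $d(x,\Theta)<\varepsilon_0$ but $\sigma(x)$ may be small, and the case $d(x,\Theta)\ge\varepsilon_0$ with $\varepsilon$ close to $\varepsilon_0$, where the forced displacement is not $\ge\varepsilon_0/2$; both are patchable because $\varepsilon$ is then bounded below by a constant.)

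For contrast, the paper circumvents precisely this difficulty by working with the time-continuous interpolation as a semimartingale: Tanaka's formula plus Burkholder--Davis--Gundy give $\EE[L^a_t(\eul)]\le c$ uniformly in $a$, $t$, $\delta$, and the occupation time formula converts this into a bound on the $(\Sigma^\delta_t)^2$-weighted occupation measure with the clean factor $\int_0^\varepsilon f(x)\,dx$ and no scale-by-scale analysis of the step-size function. The remaining terms $\sup_{[0,\varepsilon]}f\cdot(\varepsilon^{3/2-\gamma}+\delta^{3/2-\gamma})$ come from replacing $(\Sigma^\delta_t)^2$ by $\sigma^2(\eul_t)$ (an $L^q$-error of order $\sqrt\delta$), using the non-degeneracy of $\sigma$ near $\Theta$, and a bootstrap with $f=1$ together with H\"older and Young. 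If you want to keep your kernel-based route, you would need to supply an argument of comparable strength for the discrete occupation density — for instance by reintroducing local time of the interpolated scheme — at which point you have essentially reconstructed the paper's proof.
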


\begin{proof} Clearly, it is enought to show that for all $i\in\{1, \ldots, k\}$ there exists $c\in (0, \infty)$ such that for all $\delta\in(0, \delta_0]$ and all $\varepsilon\in(0, \varepsilon_0]$,
\begin{equation}\label{oo1}
\EE\Bigl[\int_0^1 f(|\eul_t-\xi_i|)\cdot 1_{[\xi_i-\varepsilon, \xi_i+\varepsilon]}(\eul_t)\, dt\Bigr]\leq c\cdot\int_0^\varepsilon f(x)\,dx+c \cdot \sup_{x\in[0, \varepsilon]} f(x)\cdot (\varepsilon^{\frac{3}{2}-\gamma}+\delta^{\frac{3}{2}-\gamma}\bigr).
\end{equation}
In the following fix $i\in\{1, \ldots, k\}$.

Let   $\delta\in(0, \delta_0]$. For $t\in[0,1]$ put
\[
\Sigma^\delta_{t}=\sigma(\eul_{\utn})+\sigma d_\sigma (\eul_{\utn})\cdot (W_t-W_{\utn}).
\]
Using~\eqref{LG},~\eqref{bound},
\eqref{intrep}
and Lemma~\ref{eulprop} we conclude that $\eul$ is a continuous semi-martingale
 with quadratic variation 
\begin{equation}\label{qv}
\langle \eul\rangle_t
=\int_0^t (\Sigma^\delta_{s})^2\, ds,\quad t\in[0,1].
\end{equation}
For $a\in\R$ let $L^a(\eul) = (L^a_t(\eul))_{t\in[0,1]}$ denote the local time
of $\eul$ at the point $a$.
Thus, 
for all $a\in\R$ and
 all $t\in[0,1]$, 
\begin{align*}
|\eul_{t}-a| & = |x_0-a| + \int_0^t \sgn(\eul_{s}-a)\cdot \mu (\eul_{\usn})\, ds + \int_0^t \sgn(\eul_{s}-a)\cdot \Sigma^\delta_{s} \, dW_s + L^a_t(\eul),
\end{align*}
where $\sgn(y) = 1_{(0,\infty)}(y) - 1_{(-\infty,0]}(y)$ for $y\in\R$,
see, e.g.~\cite[Chap.~VI]{RevuzYor2005}.
Hence, 
for all $a\in\R$ and
 all $t\in[0,1]$,
 \begin{equation}\label{jjj0}
\begin{aligned}
L^a_t(\eul) & \le |\eul_{t}-x_0| + \int_0^t |\mu (\eul_{\usn})|\, ds + \Bigl|\int_0^t \sgn(\eul_{s}-a)\cdot \Sigma^\delta_{s} \, dW_s\Bigr|\\
&\leq 2\int_0^t |\mu (\eul_{\usn})|\, ds +\Bigl|\int_0^t  \Sigma^\delta_{s} \, dW_s\Bigr|+ \Bigl|\int_0^t \sgn(\eul_{s}-a)\cdot \Sigma^\delta_{s} \, dW_s\Bigr|.
\end{aligned}
\end{equation}

Using~\eqref{LG},~\eqref{jjj0}, the H\"older inequality, the Burkholder-Davis-Gundy inequality and Lemma~\ref{eulprop} we obtain that
there exist $c_1, c_2\in (0,\infty)$ such that
  for all  $\delta\in(0, \delta_0]$,
all $a\in\R$ 
   and all $t\in[0,1]$,
\begin{equation}\label{jjj1}
\begin{aligned}
\EE\bigl[L^a_t(\eul)\bigr]  &\le c_1\cdot \int_0^1 \bigl(1+\EE\bigl[|\eul_{\usn}|\bigr]\bigr)\, ds+c_1 \,\Bigl(\int_0^1  \EE\bigl[(\Sigma^\delta_{s})^2\bigr] \, ds\Bigr)^{1/2}\\
&\leq c_2+c_1 \,\Bigl(\int_0^1  \EE\bigl[(\Sigma^\delta_{s})^2\bigr] \, ds\Bigr)^{1/2}.
\end{aligned}
\end{equation}  
Moreover, by \eqref{LG},~\eqref{bound},
 Lemma \ref{Stime1} and Lemma~\ref{eulprop}   there exist $c_1, c_2\in(0,\infty)$ such that for all $s\in[0,1]$ and all  $\delta\in(0, \delta_0]$,
\begin{equation}\label{jjj2}
\begin{aligned}
\EE\bigl[(\Sigma^\delta_{s})^2\bigr]&\leq c_1\cdot \EE\bigl[(1+|\eul_{\usn}|)^2\cdot (1+|W_s-W_{\usn}|)^2\bigr]\\
&\leq c_1\cdot\EE\bigl[(1+|\eul_{\usn}|)^2]\cdot \EE[(1+\sup_{u\in[0, \delta]}|W^{\usn}_u|)^2\bigr]\leq c_2.
\end{aligned}
\end{equation}
Combining~\eqref{jjj1} and~\eqref{jjj2} we obtain 
that there exists 
$c\in (0, \infty)$ such that
  for all  $\delta\in(0, \delta_0]$,
all $a\in\R$ 
   and all $t\in[0,1]$,
\begin{equation}\label{local1}
\EE\bigl[L^a_t(\eul)\bigr] 
\leq  c.
\end{equation}
Using~\eqref{qv},~\eqref{local1} and the occupation 
time
formula it follows that
there exists $c\in (0,\infty)$ such that  
for all $\delta\in(0, \delta_0]$ and all $\eps\in (0,\varepsilon_0]$,
\begin{equation}\label{local2}
\begin{aligned}
 & \EE\biggl[\int_0^1 f(|\eul_t-\xi_i|)\cdot 1_{[\xi_i-\eps,\xi_i+\eps]}(\eul_{t})\cdot (\Sigma^\delta_{t})^2\, dt\biggr]\\
 &\qquad\qquad= \int_{\R} f(|a-\xi_i|)\cdot 1_{[\xi_i-\eps,\xi_i+\eps]}(a)\cdot \EE\bigl[L^a_t(\eul)\bigr]\, da \le c\cdot \int_{0}^\eps f(x)\,dx. 
 \end{aligned}
\end{equation}

By ~\eqref{LG},~\eqref{bound}
and the Lipschitz continuity of $\sigma$
we obtain that 
there exist  $c_1,c_2\in (0,\infty)$ such that 
for all $\delta\in(0, \delta_0]$ and all $t\in[0,1]$,
\begin{align*}
\bigl|\sigma^2(\eul_{t})-(\Sigma^\delta_{t})^2\bigr|&\leq \bigl|\sigma(\eul_{t})-\Sigma^\delta_{t}\bigr|\cdot \bigl(|\sigma(\eul_{t})|+|\Sigma^\delta_{t}|\bigr)\\
&\leq c_1\cdot \bigl(|\sigma(\eul_{t})-\sigma(\eul_{\utn})|+|\sigma \delta_\sigma (\eul_{\utn})|\cdot |W_t-W_{\utn}|\bigr)\\
&\qquad\,\cdot \bigl(1+|\eul_{t}|+(1+|\eul_{\utn}|)\cdot(1+|W_t-W_{\utn}|)\bigr)\\
&\leq c_2\cdot  \bigl(|\eul_{t}-\eul_{\utn}|+(1+|\eul_{\utn}|)\cdot \sup_{u\in[0, \delta]}|W^{\utn}_u|\bigr) \\
&\qquad\,\cdot(1+\sup_{s\in[0,1]}|\eul_{s}|)\cdot(1+\sup_{u\in[0, \delta]}|W^{\utn}_u|).
\end{align*}
Thus, using the H\"older inequality,
 Lemma~\ref{eulprop} and Lemma \ref{Stime1} we conclude that for all $q\in[1, \infty)$ there exists  $c\in (0,\infty)$ such that 
for all $\delta\in(0, \delta_0]$ and all $t\in[0,1]$,
\begin{equation}\label{local3} 
\EE\bigl[|\sigma^2(\eul_{t})-(\Sigma^\delta_{t})^2|^q\bigr]^{1/q}\leq c\cdot \sqrt\delta.
\end{equation}

Since $\sigma$ is continuous and $\sigma(\xi_i)\neq 0$ there exist  $\kappa_i,\rho_i\in(0,\infty)$ such that 
\begin{equation}\label{localx} 
\inf_{x\in\R: |x-\xi_i|\leq \rho_i}\sigma^2(x) \ge \kappa_i.
\end{equation}
Using~\eqref{local2},~\eqref{local3}, ~\eqref{localx} and the H\"older inequality we obtain that for all $q\in(1, \infty)$ there exists $c\in (0,\infty)$ such that for all $\delta\in(0, \delta_0]$ and all $\eps \in (0,\rho_i\wedge \eps_0]$,
\begin{equation}\label{hhhh}
\begin{aligned}
&\EE\Bigl[\int_0^1 f(|\eul_t-\xi_i|)\cdot 1_{[\xi_i-\varepsilon, \xi_i+\varepsilon]}(\eul_t)\, dt\Bigr] \\
&  \qquad \le \frac{1}{\kappa}\cdot \EE\Bigl[\int_0^1 f(|\eul_t-\xi_i|)\cdot 1_{[\xi_i-\varepsilon, \xi_i+\varepsilon]}(\eul_t)\cdot  \sigma^2(\eul_{t})\, dt\Bigr]\\
&  \qquad \le \frac{1}{\kappa}\cdot \EE\Bigl[\int_0^1 f(|\eul_t-\xi_i|)\cdot 1_{[\xi_i-\varepsilon, \xi_i+\varepsilon]}(\eul_t)\cdot  (\Sigma^\delta_{t})^2\, dt\Bigr]\\
 &\qquad \qquad+\frac{1}{\kappa}\cdot \EE\Bigl[\int_0^1 f(|\eul_t-\xi_i|)\cdot 1_{[\xi_i-\varepsilon, \xi_i+\varepsilon]}(\eul_t)\cdot  \bigl|\sigma^2(\eul_{t})-(\Sigma^\delta_{t})^2\bigr|\, dt\Bigr]\\
&   \qquad\le  c\cdot \int_{0}^\eps f(x)\,dx+c \cdot \sup_{x\in[0, \varepsilon]} f(x)\cdot \sqrt \delta \cdot \int_0^1 \bigl(\PP(|\eul_t-\xi_i|\leq \varepsilon)\bigr)^{1/q}\, dt\\
 &\qquad\le  c\cdot \int_{0}^\eps f(x)\,dx+c \cdot \sup_{x\in[0, \varepsilon]} f(x)\cdot \sqrt \delta\cdot \Bigl( \int_0^1 \PP(|\eul_t-\xi_i|\leq \varepsilon)\, dt\Bigl)^{1/q}.
\end{aligned}  
\end{equation}
Note that in the case of $f=1$ the estimate \eqref{hhhh} yields that for all $q\in(1, \infty)$ there exists $c\in (0,\infty)$ such that for all $\delta\in(0, \delta_0]$ and all $\eps \in (0,\rho_i\wedge \eps_0]$,
\[
 \int_0^1 \PP(|\eul_t-\xi_i|\leq \varepsilon)\, dt\leq c\cdot \varepsilon+c\cdot \sqrt\delta\cdot  \Bigl( \int_0^1 \PP(|\eul_t-\xi_i|\leq \varepsilon)\, dt\Bigl)^{1/q}.
\]
Thus, observing that $\varepsilon_0\in (0,1]$ and $\delta_0\in (0,1)$ and using the Young inequality we obtain that for all $q\in(1, 2]$ there exist $c_1, c_2\in (0,\infty)$ such that for all $\delta\in(0, \delta_0]$ and all $\eps \in (0,\rho_i\wedge \eps_0]$,
\begin{equation}\label{pp1}
 \int_0^1 \PP(|\eul_t-\xi_i|\leq \varepsilon)\, dt\leq c_1\cdot \varepsilon+c_1\cdot \sqrt\delta\cdot  \Bigl( c_1\cdot \varepsilon+c_1\cdot \sqrt\delta\Bigl)^{1/q}\leq c_2\cdot \varepsilon+c_2\cdot \delta^{\frac{1}{2}+\frac{1}{2q}}.
\end{equation}
It follows from \eqref{hhhh} and \eqref{pp1}  that for all $q\in(1, 2]$ there exists $c\in (0,\infty)$ such that for all $\delta\in(0, \delta_0]$ and all $\eps \in (0,\rho_i\wedge \eps_0]$,
\begin{align*}
&\EE\Bigl[\int_0^1 f(|\eul_t-\xi_i|)\cdot 1_{[\xi_i-\varepsilon, \xi_i+\varepsilon]}(\eul_t)\, dt\Bigr] \le  c\cdot \int_{0}^\eps f(x)\,dx+c \cdot \sup_{x\in[0, \varepsilon]} f(x)\cdot \sqrt \delta \cdot \bigl(\varepsilon^{\frac{1}{q}}+\delta^{\frac{1}{2q}+\frac{1}{2q^2}}\bigr).
\end{align*}
By the Young inequality, for all $q\in(1, 2]$, all $\delta\in(0, \delta_0]$ and all $\eps \in (0,\rho_i\wedge \eps_0]$,
\[
\sqrt \delta \cdot \varepsilon^{\frac{1}{q}}\leq \frac{1}{3}\delta^{3/2}+\frac{2}{3}\varepsilon^{\frac{3}{2q}}.
\]
Combining the latter two estimates we conclude that for all $q\in(1, 2]$ there exists $c\in (0,\infty)$ such that for all $\delta\in(0, \delta_0]$ and all $\eps \in (0,\rho_i\wedge \eps_0]$,
\begin{align*}
&\EE\Bigl[\int_0^1 f(|\eul_t-\xi_i|)\cdot 1_{[\xi_i-\varepsilon, \xi_i+\varepsilon]}(\eul_t)\, dt\Bigr] \le  c\cdot \int_{0}^\eps f(x)\,dx+c \cdot \sup_{x\in[0, \varepsilon]} f(x)\cdot  \bigl(\varepsilon^{\frac{3}{2q}}+\delta^{\frac{1}{2}+\frac{1}{2q}+\frac{1}{2q^2}}\bigr).
\end{align*}
This yields \eqref{oo1} and  completes the proof of the lemma.

\end{proof}

The following lemma provides upper bounds for the probabilities that increments of the adaptive quasi-Milstein scheme are  large 
compared to the actual distance of the scheme from the set $\Theta$. 

\begin{lemma}\label{dist}
Let $\alpha\in(0, \infty)$ and $q\in[1, \infty)$. Then there exist $c_1, c_2, c_3\in (0, \infty)$ such that for all $\delta\in(0,\delta_0]$ and all $t\in[0,1]$,
\begin{itemize}
\item[(i)]
$\PP(|\eul_{\utn}-\eul_{t}|\geq \alpha\cdot \varepsilon_2^{\delta},\, \eul_{\utn}\in \Theta^{\varepsilon_2^{\delta}})\leq c_1\cdot \delta^q$,
\item[(ii)]
$\PP(|\eul_{t}-\eul_{\utn}|\geq \alpha\cdot  d(\eul_{\utn}, \Theta),\, \eul_{\utn}\in \Theta^{\varepsilon_1^{\delta}}\setminus \Theta^{\varepsilon_2^{\delta}})\leq c_2\cdot \delta^q$,
\item[(iii)]
$\PP(|\eul_{t}-\eul_{\utn}|\geq \alpha\cdot \varepsilon_1^{\delta},\,\eul_{\utn}\in \Theta^{\varepsilon_0}\setminus \Theta^{\varepsilon_1^{\delta}})\leq c_3\cdot \delta^q$.
\end{itemize}

\end{lemma}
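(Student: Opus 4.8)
The plan is to prove all three bounds simultaneously. The structural observation is that in each of the three regimes the step size has been tuned so that the threshold on the left-hand side equals exactly $\alpha\log^2(1/\delta)$ times the Brownian scale $\sqrt{h^\delta(\eul_{\utn})}$ of the corresponding one-step increment: if $\eul_{\utn}\in\Theta^{\varepsilon_2^\delta}$ then $h^\delta(\eul_{\utn})=\delta^2\log^4(1/\delta)$ and $\varepsilon_2^\delta=\log^2(1/\delta)\sqrt{h^\delta(\eul_{\utn})}$; if $\eul_{\utn}\in\Theta^{\varepsilon_1^\delta}\setminus\Theta^{\varepsilon_2^\delta}$ then $h^\delta(\eul_{\utn})=(d(\eul_{\utn},\Theta)/\log^2(1/\delta))^2$ and $d(\eul_{\utn},\Theta)=\log^2(1/\delta)\sqrt{h^\delta(\eul_{\utn})}$; and if $\eul_{\utn}\in\Theta^{\varepsilon_0}\setminus\Theta^{\varepsilon_1^\delta}$ then $h^\delta(\eul_{\utn})=\delta$ and $\varepsilon_1^\delta=\log^2(1/\delta)\sqrt{h^\delta(\eul_{\utn})}$. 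Hence each of the three events is contained in
\[
\bigl\{\,\eul_{\utn}\in\Theta^{\varepsilon_0},\ |\eul_t-\eul_{\utn}|\geq\alpha\log^2(1/\delta)\,\sqrt{h^\delta(\eul_{\utn})}\,\bigr\},
\]
and it suffices to bound the probability of this single set by $c\cdot\delta^q$, uniformly in $\delta\in(0,\delta_0]$ and $t\in[0,1]$.

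First I would decompose $\Omega=\bigcup_{i\geq0}A_i$ modulo a null set, with $A_i=\{\tau_i^\delta\leq t<\tau_{i+1}^\delta\}$; this is legitimate since $h^\delta\geq\delta^2\log^4(1/\delta)>0$, so $\tau_i^\delta\uparrow\infty$. By Lemma~\ref{Stime}, $A_i\in\F_{\tau_i^\delta}$, the realized step size $h_i:=h^\delta(\eul_{\tau_i^\delta})=\tau_{i+1}^\delta-\tau_i^\delta$ is $\F_{\tau_i^\delta}$-measurable, and $W^{\tau_i^\delta}$ is a Brownian motion independent of $\F_{\tau_i^\delta}$. On $A_i\cap\{\eul_{\tau_i^\delta}\in\Theta^{\varepsilon_0}\}$ the value $\eul_{\tau_i^\delta}$ lies in the bounded set $\Theta^{\varepsilon_0}$, so $|\mu(\eul_{\tau_i^\delta})|$, $|\sigma(\eul_{\tau_i^\delta})|$ and $|\sigma d_\sigma(\eul_{\tau_i^\delta})|$ are bounded by a constant $C$ depending only on $\varepsilon_0$ and the coefficients (using that $\mu,\sigma$ are Lipschitz and $\|d_\sigma\|_\infty<\infty$, see~\eqref{bound}); combining this with the recursion~\eqref{mil2} and $t-\tau_i^\delta\leq h_i$ one obtains the pathwise bound $|\eul_t-\eul_{\tau_i^\delta}|\leq C(M_i+M_i^2)+2Ch_i$ on $A_i\cap\{\eul_{\tau_i^\delta}\in\Theta^{\varepsilon_0}\}$, where $M_i:=\sup_{u\in[0,h_i]}|W^{\tau_i^\delta}_u|$.

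The remaining step is to absorb the deterministic term, linearize, and estimate a Brownian tail. Since $h_i\leq\delta\leq\delta_0<1$, for all $\delta$ below a threshold depending only on $\alpha$ and $C$ one has $2Ch_i\leq\tfrac12\alpha\log^2(1/\delta)\sqrt{h_i}$ and $\tfrac{\alpha}{4C}\log^2(1/\delta)\sqrt{h_i}<1$; using the elementary implication that $\max(a,a^2)\geq b$ with $0<b<1$ forces $a\geq b$, the event $\{|\eul_t-\eul_{\tau_i^\delta}|\geq\alpha\log^2(1/\delta)\sqrt{h_i}\}$ is then contained, on $A_i\cap\{\eul_{\tau_i^\delta}\in\Theta^{\varepsilon_0}\}$, in $\{M_i\geq\tfrac{\alpha}{4C}\log^2(1/\delta)\sqrt{h_i}\}$. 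Conditioning on $\F_{\tau_i^\delta}$ and using Brownian scaling, $\PP(M_i\geq\lambda\sqrt{h_i}\mid\F_{\tau_i^\delta})=\PP(\sup_{u\in[0,1]}|B_u|\geq\lambda)$ for a standard Brownian motion $B$, which by the reflection principle is at most $2e^{-\lambda^2/2}$; with $\lambda=\tfrac{\alpha}{4C}\log^2(1/\delta)$ this is the deterministic bound $2\exp(-c'\log^4(1/\delta))$. Summing over $i$ and using $\sum_{i\geq0}\PP(A_i)=1$ yields $\PP(\text{event})\leq2\exp(-c'\log^4(1/\delta))$; since $\log^4(1/\delta)\gg\log(1/\delta)$ as $\delta\downarrow0$, this is at most $\delta^q$ for small $\delta$, and the remaining range of $\delta$ in $(0,\delta_0]$ is absorbed into the constant, which proves (i)--(iii).

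I do not expect a genuine obstacle: once the step-size bookkeeping reduces everything to a single Brownian tail at $\log^2(1/\delta)$ standard deviations, the estimate is routine. The two points requiring care are the measurability bookkeeping — it is cleanest to decompose along the $A_i$ rather than to use Lemma~\ref{Stime1} directly, since the time-lag $t-\tau_i^\delta$ is random in general but becomes $\F_{\tau_i^\delta}$-measurable on $A_i$ — and ensuring the coefficient sizes are controlled purely through the boundedness of $\Theta^{\varepsilon_0}$, so that the $L_p$-bounds of Lemma~\ref{eulprop} are not needed here; if one wished to phrase the lemma without restricting $\eul_{\utn}$ to $\Theta^{\varepsilon_0}$, it is at that last point that~\eqref{mpr1} would have to enter, together with a truncation on $\{\sup_{s\in[0,1]}|\eul_s|\leq\delta^{-\beta}\}$.
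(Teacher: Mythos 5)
Your proposal is correct and follows essentially the same route as the paper: bound the one-step coefficients on the bounded set $\Theta^{\varepsilon_0}$, exploit the independence of the post-$\utn$ Brownian increments from $\eul_{\utn}$ (the paper does this via Lemma~\ref{Stime1} and the sup over $[0,h^\delta(\eul_{\utn})]$, you condition on $\mathcal F_{\tau_i^\delta}$ over the events $\{\tau_i^\delta\le t<\tau_{i+1}^\delta\}$, which is the same mechanism), and observe that in each of the three regimes the threshold is exactly $\alpha\log^2(1/\delta)$ times the Brownian scale $\sqrt{h^\delta(\eul_{\utn})}$ of a single step, so the resulting tail beats every power of $\delta$. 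Your deviations are only cosmetic — handling the three cases in one stroke, absorbing the Milstein quadratic term through the $\max(a,a^2)$ observation instead of splitting off a second tail as in \eqref{e44b}, and using the Gaussian reflection-principle bound in place of the estimate \eqref{tail} from~\cite{NSS19} — and all of them are sound.
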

\begin{proof}
Define $\Phi\colon\R\times C([0, \infty);\R)\to C([0, \infty);\R)$ by
\[
\Phi(y, u)(t)=y+\mu(y)\cdot t+\sigma(y)\cdot u(t)+
\frac{1}{2}\sigma d_\sigma (y)\cdot(u^2(t)-t)
\]
for $y\in\R$, $u\in C([0, \infty);\R)$ and $t\in[0, \infty)$ and  observe that there exists $\kappa\in (0, \infty)$ such that for all $y\in \Theta^{\varepsilon_0}$, all $u\in C([0, \infty);\R)$ and all $t\in[0, \infty)$,
\begin{equation}\label{Phi}
|\Phi(y, u)(t)-y|\leq \kappa\cdot (t+|u(t)|+
u^2(t)).
\end{equation}

We first proof (i). 
Using Lemma \ref{Stime1} we obtain that for all $\delta\in(0, \delta_0]$ and all $t\in[0,1]$, 
\begin{equation}\label{e33b} 
\begin{aligned}
&\PP(|\eul_{\utn}-\eul_{t}|\geq \alpha\cdot \varepsilon_2^{\delta},\, \eul_{\utn}\in \Theta^{\varepsilon_2^{\delta}})\\
&\qquad= \PP(|\Phi(\eul_{\utn},W^{\utn})(t-\utn)-\eul_{\utn}|\geq \alpha\cdot \varepsilon_2^{\delta}, \,\eul_{\utn}\in \Theta^{\varepsilon_2^{\delta}})\\
&\qquad\leq \PP(\sup_{s\in[0, h^{\delta}(\eul_{\utn})]}|\Phi(\eul_{\utn},W^{\utn})(s)-\eul_{\utn}|\geq \alpha\cdot \varepsilon_2^{\delta}, \,\eul_{\utn}\in \Theta^{\varepsilon_2^{\delta}})\\
&\qquad=\int_{\Theta^{\varepsilon_2^{\delta}}} \PP(\sup_{s\in[0, h^{\delta}(y)]}|\Phi(y,W)(s)-y|\geq \alpha\cdot \varepsilon_2^{\delta} )\,\PP^{\eul_{\utn}}(dy).
\end{aligned}
\end{equation}
By \eqref{Phi},  for all $\delta\in(0, \delta_0]$ and all $y\in \Theta^{\varepsilon_2^{\delta}}$,
\begin{equation}\label{e44b}
\begin{aligned}
&\PP(\sup_{s\in[0, h^{\delta}(y)]}|\Phi(y,W)(s)-y|\geq \alpha\cdot \varepsilon_2^{\delta}  )\\
&\qquad\leq \PP\bigr(h^{\delta}(y)+\sup_{s\in[0, h^{\delta}(y)]}|W_s|+\sup_{s\in[0, h^{\delta}(y)]}W_s^2\geq \tfrac{\alpha \varepsilon_2^{\delta}}{\kappa} \bigl)\\
&\qquad \leq \PP\bigr(h^{\delta}(y)+\sup_{s\in[0, h^{\delta}(y)]}|W_s|\geq \tfrac{\alpha\varepsilon_2^{\delta}}{2\kappa} \bigl)+\PP\bigr(\sup_{s\in[0, h^{\delta}(y)]}W_s^2\geq \tfrac{\alpha\varepsilon_2^{\delta}}{2\kappa} \bigl)\\
&\qquad=\PP\bigr(\sup_{s\in[0, h^{\delta}(y)]}|W_s|\geq \tfrac{\alpha\varepsilon_2^{\delta}}{2\kappa} -h^{\delta}(y)\bigl)+\PP\bigr(\sup_{s\in[0, h^{\delta}(y)]}|W_s|\geq \tfrac{\sqrt{\alpha\varepsilon_2^{\delta}}}{\sqrt{2\kappa}}  \bigl).
\end{aligned}
\end{equation}
Recall that for all $\delta\in(0, \delta_0]$ and all  $y\in  \Theta^{\varepsilon_2^{\delta}}$ we have
$
h^{\delta}(y)=\delta^2 \log^4(1/\delta)
$.
Moreover, by~\cite[Lemma 3.4]{NSS19}, 
there exists $c\in (0, \infty)$ such that for all $u\in(0,\infty)$ and all $x\in\R$,
\begin{equation}\label{tail}
\PP(\sup_{s\in[0, u]}|W_s|\geq x)\leq c\cdot e^{-\frac{x}{\sqrt u}}.
\end{equation}
Hence, there exist $c_1, c_2, c_3 \in(0, \infty)$ such that for all $\delta\in(0, \delta_0]$ and all  $y\in  \Theta^{\varepsilon_2^{\delta}}$,
\begin{align*}
\PP\bigl(\sup_{s\in[0, h^{\delta}(y)]}|W_s|\geq \tfrac{\alpha\varepsilon_2^{\delta}}{2\kappa} -h^{\delta}(y)\bigr)&\leq c_1\cdot e^{-\tfrac{\alpha}{2\kappa}\log^2(1/\delta)+\delta\log^2(1/\delta)}\leq c_2\cdot \delta^{q}
\end{align*}
as well as
\begin{align*}
\PP\bigl(\sup_{s\in[0, h^{\delta}(y)]}|W_s|\geq \tfrac{\sqrt{\alpha\varepsilon_2^{\delta}}}{\sqrt{2\kappa}}\bigr)&\leq c_1\cdot e^{-\tfrac{\sqrt{\alpha}}{\sqrt{2\kappa\delta}}}\leq c_3\cdot \delta^{q}.
\end{align*}
The latter two estimates together with \eqref{e33b} and \eqref{e44b} imply (i).

We next proof (ii). Proceeding similarly to \eqref{e33b} and \eqref{e44b}  we obtain that  for all $\delta\in(0, \delta_0]$ and all $t\in[0,1]$,
\begin{equation}\label{e33} 
\begin{aligned}
&
\PP(|\eul_{t}-\eul_{\utn}|\geq \alpha\cdot d(\eul_{\utn}, \Theta),\,\eul_{\utn}\in \Theta^{\varepsilon_1^{\delta}}\setminus \Theta^{\varepsilon_2^{\delta}})\\
&\qquad\leq \int_{\Theta^{\varepsilon_1^{\delta}}\setminus \Theta^{\varepsilon_2^{\delta}}} \PP(\sup_{s\in[0, h^{\delta}(y)]}|\Phi(y,W)(s)-y|\geq \alpha\cdot d(y, \Theta) )\,\PP^{\eul_{\utn}}(dy)
\end{aligned}
\end{equation}
and  for all $\delta\in(0, \delta_0]$ and all  $y\in \Theta^{\varepsilon_1^{\delta}}\setminus \Theta^{\varepsilon_2^{\delta}}$,
\begin{equation}\label{e44}
\begin{aligned}
&\PP(\sup_{s\in[0, h^{\delta}(y)]}|\Phi(y,W)(s)-y|\geq \alpha \cdot d(y, \Theta) )\\
&\qquad\leq \PP\bigr(\sup_{s\in[0, h^{\delta}(y)]}|W_s|\geq \tfrac{\alpha}{2\kappa}\, d(y, \Theta) -h^{\delta}(y)\bigl)+\PP\bigr(\sup_{s\in[0, h^{\delta}(y)]}|W_s|\geq \tfrac{\sqrt{\alpha}}{\sqrt{2\kappa}}\, \sqrt{d(y, \Theta)} \bigl).
\end{aligned}
\end{equation}
Recall that for all $\delta\in(0, \delta_0]$ and all  $y\in \Theta^{\varepsilon_1^{\delta}}\setminus \Theta^{\varepsilon_2^{\delta}}$ we have
$
h^{\delta}(y)=\Bigl(\frac{d(y, \Theta)}{\log^2(1/\delta)}\Bigr)^2.
$
Hence, applying \eqref{tail} we obtain that there exist $c_1, c_2, c_3 \in(0, \infty)$ such that for all $\delta\in(0, \delta_0]$ and all  $y\in \Theta^{\varepsilon_1^{\delta}}\setminus \Theta^{\varepsilon_2^{\delta}}$,
\begin{align*}
\PP\bigr(\sup_{s\in[0, h^{\delta}(y)]}|W_s|\geq \tfrac{\alpha}{2\kappa}\, d(y, \Theta) -h^{\delta}(y)\bigl)\leq c_1\cdot e^{-\tfrac{\alpha}{2\kappa }\log^2(1/\delta)  +\sqrt {h^{\delta}(y)}}\leq c_2\cdot \delta^{q}
\end{align*}
as well as
\begin{align*}
\PP\bigr(\sup_{s\in[0, h^{\delta}(y)]}|W_s|\geq \tfrac{\sqrt{\alpha}}{\sqrt{2\kappa}}\, \sqrt{d(y, \Theta)} \bigl)\leq c_1\cdot e^{-\tfrac{\sqrt{\alpha} }{\sqrt{2\kappa} }\cdot \tfrac{\log^2(1/\delta)}{\sqrt{ d(y, \Theta)}}}\leq c_1\cdot e^{-\tfrac{\sqrt{\alpha} }{\sqrt{2\kappa} }\cdot \tfrac{\log(1/\delta)}{\delta^{1/4}}}\leq c_3\cdot \delta^{q}.
\end{align*}
The latter two estimates together with \eqref{e33} and \eqref{e44} yield (ii).

We finally prove (iii). Proceeding similarly to \eqref{e33b} and \eqref{e44b}  we obtain that  for all $\delta\in(0, \delta_0]$ and all $t\in[0,1]$,
\begin{equation}\label{e33a} 
\begin{aligned}
&\PP(|\eul_{t}-\eul_{\utn}|\geq \alpha\cdot \varepsilon_1^{\delta},\,\eul_{\utn}\in \Theta^{\varepsilon_0}\setminus \Theta^{\varepsilon_1^{\delta}})\\
&\qquad\leq \int_{\Theta^{\varepsilon_0}\setminus \Theta^{\varepsilon_1^{\delta}}} \PP(\sup_{s\in[0, h^\delta(y)]}|\Phi(y,W)(s)-y|\geq \alpha\cdot \varepsilon_1^{\delta} )\,\PP^{\eul_{\utn}}(dy)
\end{aligned}
\end{equation}
and  for all $\delta\in(0, \delta_0]$ and all  $y\in \Theta^{\varepsilon_0}\setminus \Theta^{\varepsilon_1^{\delta}}$,
\begin{equation}\label{e44a}
\begin{aligned}
&\PP(\sup_{s\in[0, h^\delta(y)]}|\Phi(y,W)(s)-y|\geq \alpha\cdot \varepsilon_1^{\delta} )\\
&\qquad\leq \PP\bigr(\sup_{s\in[0, h^\delta(y)]}|W_s|\geq \tfrac{\alpha \varepsilon_1^{\delta}}{2\kappa} -h^\delta(y)\bigl)+\PP\bigr(\sup_{s\in[0, h^\delta(y)]}|W_s|\geq \tfrac{\sqrt{\alpha \varepsilon_1^{\delta}}}{\sqrt{2\kappa}}  \bigl).
\end{aligned}
\end{equation}
Recall that for all $\delta\in(0, \delta_0]$ and all  $y\in \Theta^{\varepsilon_0}\setminus \Theta^{\varepsilon_1^{\delta}}$ we have
$
h^{\delta}(y)=\delta
$.
Applying  \eqref{tail} we therefore obtain that there exist $c_1, c_2, c_3 \in(0, \infty)$ such that for all $\delta\in(0, \delta_0]$ and all $y\in \Theta^{\varepsilon_0}\setminus \Theta^{\varepsilon_1^{\delta}}$,
\begin{align*}
\PP\bigl(\sup_{s\in[0, h^\delta(y)]}|W_s|\geq \tfrac{\alpha\varepsilon_1^{\delta}}{2\kappa}-h^{\delta}(y)\bigr)&\leq c_1\cdot e^{-\tfrac{\alpha}{2\kappa}\log^2(1/\delta)+\sqrt\delta}
\leq c_2\cdot \delta^{q}
\end{align*}
as well as
\begin{align*}
\PP\bigl(\sup_{s\in[0, h^{\delta}(y)]}|W_s|\geq \tfrac{\sqrt{\alpha \varepsilon_1^{\delta}}}{\sqrt{2\kappa}}\bigr)\leq c_1\cdot e^{-\tfrac{ \sqrt{\alpha}}{\sqrt{2\kappa}}\cdot\tfrac{\log(1/\delta)}{\delta^{1/4}}}\leq c_3\cdot \delta^{q}.
\end{align*}
The latter two estimates together with \eqref{e33a} and \eqref{e44a} imply (iii) and complete the proof of the lemma.

\end{proof}

Next, put
\begin{equation}\label{setS}
S = \Bigl(\bigcup_{\ell=1}^{k+1} (\xi_{\ell-1},\xi_\ell)^2\Bigr)^c
\end{equation}
and  note that $S=\cup_{\ell=1}^{k}\{(x,y)\in \R^2\colon (x-\xi_\ell)\cdot (y-\xi_\ell)\le 0\}$. 
We are ready to establisch the main result in this section, which provides a $p$-th mean estimate of the time average of $|\eul_{t}-\eul_{\utn}|^2$ subject to the condition that the pair $(\eul_{t},\eul_{\utn})$ lies in the set $S$.
\begin{prop}\label{prop1}
Let  $p\in [1,\infty)$. Then  there exists  $c\in(0, \infty)$ such that for all $\delta\in(0,\delta_0]$, 
\begin{equation}\label{l33a}
\EE\Bigl[\Bigl|\int_0^1  |\eul_{t}-\eul_{\utn}|^2\cdot \ind_{S} (\eul_{t},\eul_{\utn})\,dt\Bigr|^p\Bigr]^{1/p}\leq c\cdot \delta^{2}. 
\end{equation}
\end{prop}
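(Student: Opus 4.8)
The plan is to decompose the time-integral according to the position of $\eul_{\utn}$ relative to the set $\Theta$, using the partition of unity
\[
1=1_{(\Theta^{\varepsilon_0})^c}(\eul_{\utn})+1_{\Theta^{\varepsilon_0}\setminus \Theta^{\varepsilon_1^{\delta}}}(\eul_{\utn})+1_{\Theta^{\varepsilon_1^{\delta}}\setminus \Theta^{\varepsilon_2^{\delta}}}(\eul_{\utn})+ 1_{\Theta^{\varepsilon_2^{\delta}}}(\eul_{\utn}),
\]
so that \eqref{l33a} reduces to bounding four contributions, each by $c\cdot\delta^{2}$. By the triangle inequality in $L_p$ it suffices to treat each term separately. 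Throughout I would use that on the event $\{(\eul_{t},\eul_{\utn})\in S\}$ there is some $\xi_i$ lying between $\eul_{\utn}$ and $\eul_{t}$, hence $|\eul_{t}-\eul_{\utn}|\ge d(\eul_{\utn},\Theta)$ and also $|\eul_{t}-\xi_i|\le|\eul_{t}-\eul_{\utn}|$; the key driver of smallness is that the step-size function $h^\delta$ forces $|\eul_{t}-\eul_{\utn}|$ to be tiny whenever $\eul_{\utn}$ is close to $\Theta$.

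The first term, where $\eul_{\utn}\notin\Theta^{\varepsilon_0}$: here $d(\eul_{\utn},\Theta)\ge\varepsilon_0$, and $(\eul_{t},\eul_{\utn})\in S$ forces the increment $|\eul_{t}-\eul_{\utn}|\ge\varepsilon_0$. By Lemma~\ref{eulprop} (estimate \eqref{mpr2}) together with Lemma~\ref{Stime1}, the increment over a step of length $\le\delta$ has all moments of order $\sqrt{\delta}$, and a Markov/tail argument as in Lemma~\ref{dist} shows the probability of such a large increment is $\le c\cdot\delta^q$ for any $q$; combining with the uniform $L_p$-bound \eqref{mpr1} on the increment via H\"older gives a bound $\le c\cdot\delta^2$ (in fact much smaller). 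For the third term ($\eul_{\utn}\in\Theta^{\varepsilon_1^\delta}\setminus\Theta^{\varepsilon_2^\delta}$) and the fourth term ($\eul_{\utn}\in\Theta^{\varepsilon_2^\delta}$) I would split the event further into $\{|\eul_{t}-\eul_{\utn}|<\alpha\cdot d(\eul_{\utn},\Theta)\}$ (resp.\ $\{|\eul_t-\eul_{\utn}|<\alpha\cdot\varepsilon_2^\delta\}$) and its complement for a suitable fixed $\alpha\in(0,1)$. On the complement, Lemma~\ref{dist}(i),(ii) gives probability $\le c\cdot\delta^q$, which combined with \eqref{mpr1},\eqref{mpr2} and H\"older kills the term. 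On the main event, $(\eul_t,\eul_{\utn})\in S$ cannot occur for $\alpha$ small (since the increment is too small to cross a $\xi_i$ from a distance $>d(\eul_{\utn},\Theta)$... — more precisely, if $|\eul_t-\eul_{\utn}|<d(\eul_{\utn},\Theta)$ then no $\xi_i$ lies between them), so this event contributes nothing; the net bound is $c\cdot\delta^q\le c\cdot\delta^2$.

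The second term, where $\eul_{\utn}\in\Theta^{\varepsilon_0}\setminus\Theta^{\varepsilon_1^\delta}$, is the delicate one and will be the main obstacle: here the step size is still the coarse value $\delta$, so the increment is only $O(\sqrt{\delta})$ and the crude bound $|\eul_t-\eul_{\utn}|^2\le\sqrt{\delta}\cdot|\eul_t-\eul_{\utn}|\cdot$const is not enough. The idea is to bound $|\eul_t-\eul_{\utn}|^2\cdot 1_S(\eul_t,\eul_{\utn})\le c\,|\eul_t-\xi_i|\cdot d(\eul_{\utn},\Theta)\cdot 1_S$ or, better, to use that on $S$ the point $\eul_t$ itself is within $\varepsilon_0$ of $\Theta$, apply the occupation-time estimate of Lemma~\ref{occup} to $\eul$ (with $f$ a suitable power so that $\int_0^1 |\eul_t-\xi_i|^2\cdot 1_{\Theta^{\varepsilon_1^\delta}}(\eul_t)\,dt$ has expectation $O((\varepsilon_1^\delta)^3)+O(\delta^{3/2-})\cdot(\varepsilon_1^\delta)^2=O(\delta^{2})$ up to logarithmic factors absorbed by the $\log$-powers in $\varepsilon_1^\delta,\varepsilon_2^\delta$), and control the complementary event $\{|\eul_t-\eul_{\utn}|\ge\alpha\varepsilon_1^\delta\}$ by Lemma~\ref{dist}(iii). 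The bookkeeping with the logarithmic factors in $\varepsilon_1^\delta=\sqrt\delta\log^2(1/\delta)$ and $\varepsilon_2^\delta=\delta\log^4(1/\delta)$ — checking that they precisely cancel to leave a clean $\delta^2$ and not $\delta^2\log^{c}(1/\delta)$, or alternatively arguing that any fixed power of $\delta$ beats the logs — is where the choice of these thresholds, inherited from~\cite{NSS19}, is essential, and getting the exponents to line up is the real work.
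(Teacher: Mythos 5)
Your decomposition into the four regions $(\Theta^{\varepsilon_0})^c$, $\Theta^{\varepsilon_0}\setminus\Theta^{\varepsilon_1^{\delta}}$, $\Theta^{\varepsilon_1^{\delta}}\setminus\Theta^{\varepsilon_2^{\delta}}$, $\Theta^{\varepsilon_2^{\delta}}$ is exactly the paper's, and your treatment of the first and third terms is sound. But there is a genuine gap in the fourth term, the region $\{\eul_{\utn}\in\Theta^{\varepsilon_2^{\delta}}\}$. You split on $\{|\eul_t-\eul_{\utn}|<\alpha\varepsilon_2^{\delta}\}$ and claim that on this main event $(\eul_t,\eul_{\utn})\in S$ cannot occur because the increment cannot cross a $\xi_i$. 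That is false: on this region $d(\eul_{\utn},\Theta)<\varepsilon_2^{\delta}$, and indeed can be arbitrarily small, so an increment smaller than $\alpha\varepsilon_2^{\delta}$ can perfectly well cross $\xi_i$. (Your correct criterion ``no crossing if $|\eul_t-\eul_{\utn}|<d(\eul_{\utn},\Theta)$'' does not apply here, precisely because near $\Theta$ the threshold $\varepsilon_2^{\delta}$ exceeds $d(\eul_{\utn},\Theta)$.) This main event is in fact the dominant contribution and cannot be dismissed. To bound it one needs two ingredients: first, on $\{\eul_{\utn}\in\Theta^{\varepsilon_2^{\delta}}\}$ the step size is $\delta^2\log^4(1/\delta)$, so by the increment estimate \eqref{mpr2} one gets $\EE[|\eul_t-\eul_{\utn}|^{4p}]^{1/2}\le c\,\delta^{2p}\log^{4p}(1/\delta)$ --- which alone is \emph{not} $\le c\,\delta^{2p}$ because of the logarithm; second, an occupation-time bound $\int_0^1\PP(\eul_{\utn}\in\Theta^{\varepsilon_2^{\delta}})\,dt\le c\,\delta\log^4(1/\delta)$, obtained from Lemma~\ref{occup} with $f=1$ together with Lemma~\ref{dist}(i) (to pass from $\eul_{\utn}$ to $\eul_t$). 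Combining the two via Cauchy--Schwarz yields $c\,\delta^{2p+1/2}\log^{4p+2}(1/\delta)\le c\,\delta^{2p}$; the extra $\delta^{1/2}$ is what kills the logarithms. Your proposal never produces this extra factor for the fourth term, so as written it only proves a bound with a spurious $\log^{4p}(1/\delta)$.

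Conversely, the term you single out as ``the delicate one'' --- the second region $\Theta^{\varepsilon_0}\setminus\Theta^{\varepsilon_1^{\delta}}$ --- is actually easy, by your own opening observation: on $S$ one has $|\eul_t-\eul_{\utn}|\ge d(\eul_{\utn},\Theta)\ge\varepsilon_1^{\delta}$, and since $\varepsilon_1^{\delta}=\sqrt\delta\,\log^2(1/\delta)$ dominates the typical increment $\sqrt\delta$ over a step of length $\delta$, Lemma~\ref{dist}(iii) with $\alpha=1$ and $q=2p$ gives probability $\le c\,\delta^{2p}$; Cauchy--Schwarz with the moment bound $\EE[|\eul_t-\eul_{\utn}|^{4p}]^{1/2}\le c\,\delta^{p}$ then yields $c\,\delta^{2p}$ directly. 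No occupation-time functional with weight $|\eul_t-\xi_i|^2$ and no cancellation of logarithmic factors is needed there; the occupation-time lemma is needed only where you omitted it, namely in the $\Theta^{\varepsilon_2^{\delta}}$ term.
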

\begin{proof}
For $\delta\in(0,\delta_0]$ and $i\in\{1,2,3,4\}$ let
\[
E^{\delta}_{i}=\EE\Bigl[\int_0^1  |\eul_{t}-\eul_{\utn}|^{2p}\cdot \ind_{S} (\eul_{t},\eul_{\utn})\cdot \ind_{O^{\delta}_{i}}(\eul_{\utn}) \,dt\Bigr],
\]
where 
\[
O_1^{\delta}=(\Theta^{\varepsilon_0})^c, \quad O_2^{\delta}=\Theta^{\varepsilon_0}\setminus \Theta^{\varepsilon_1^{\delta}}, \quad O_3^{\delta}=\Theta^{\varepsilon_1^{\delta}}\setminus \Theta^{\varepsilon_2^{\delta}} , \quad O_4^{\delta}=\Theta^{\varepsilon_2^{\delta}}.
\]
Then for all $\delta\in(0,\delta_0]$, 
\begin{equation}\label{q1}
\EE\Bigl[\Bigl|\int_0^1  |\eul_{t}-\eul_{\utn}|^2\cdot \ind_{S} (\eul_{t},\eul_{\utn})\,dt\Bigr|^p\Bigr]\leq \EE\Bigl[\int_0^1  |\eul_{t}-\eul_{\utn}|^{2p}\cdot \ind_{S} (\eul_{t},\eul_{\utn})\,dt\Bigr]\leq \sum_{i=1}^4 E^{\delta}_{i}.
\end{equation}
Below we show that for all $i\in\{1,2,3,4\}$ there exists $c\in(0, \infty)$ such that for all $\delta\in(0,\delta_0]$, 
\begin{equation}\label{q2}
E^{\delta}_{i} \leq c\cdot \delta^{2p}.
\end{equation}
Clearly, \eqref{q1} and \eqref{q2} imply \eqref{l33a}.

It remains to prove \eqref{q2}. We start with the analysis of $E^{\delta}_{1}$. For all $\delta\in(0,\delta_0]$ and all $t\in[0,1]$,
\[
\{(\eul_{t},\eul_{\utn})\in S\}\cap \{\eul_{\utn}\in O^{\delta}_{1}\}\subseteq \{|\eul_{t}-\eul_{\utn}|\geq \varepsilon_0\}.
\]
Thus, using the Markov inequality and Lemma \ref{eulprop} we obtain that there exist $c_1, c_2\in(0, \infty)$ such that for all $\delta\in(0,\delta_0]$,
\begin{align*}
E^{\delta}_{1}& \leq \int_0^1  \EE\bigl[|\eul_{t}-\eul_{\utn}|^{2p}\cdot \ind_{\{|\eul_{t}-\eul_{\utn}|\geq \varepsilon_0\}}\bigr]\,dt\\
&\leq \int_0^1  \EE\bigl[|\eul_{t}-\eul_{\utn}|^{4p}\bigr]^{1/2}\cdot (\PP(|\eul_{t}-\eul_{\utn}|\geq \varepsilon_0))^{1/2} \,dt\leq \frac{1}{\varepsilon_0^{2p}}\int_0^1  \EE\bigl[|\eul_{t}-\eul_{\utn}|^{4p}\bigr] \,dt\\
&\leq \frac{c_1}{\varepsilon_0^{2p}}\int_0^1  \bigl(\EE\bigl[|\eul_{t}-\eul_{0\vee (t-\delta)}|^{4p}\bigr]+\EE\bigl[|\eul_{\utn}-\eul_{0\vee( t-\delta)}|^{4p}\bigr]\bigr) \,dt\\
&\leq \frac{2c_1}{\varepsilon_0^{2p}}\int_0^1  \EE\bigl[\sup_{s\in[0\vee (t-\delta), t]}|\eul_{s}-\eul_{0\vee (t-\delta)}|^{4p}\bigr] \,dt\leq c_2\cdot\delta^{2p},
\end{align*}
which shows that \eqref{q2} holds for $i=1$.

We next estimate $E^{\delta}_{2}$.  Using Lemma \ref{eulprop} we obtain that there exists $c\in(0, \infty)$ such that  for all $\delta\in(0,\delta_0]$,
\begin{equation}\label{e4}
\begin{aligned}
E^{\delta}_{2}
& \leq \int_0^1  \EE\bigl[|\eul_{t}-\eul_{\utn}|^{4p}\bigr]^{1/2}\cdot (\PP((\eul_{t},\eul_{\utn})\in S, \,\eul_{\utn}\in O_2^\delta))^{1/2} \,dt\\
&\leq c\cdot \delta^{p}\cdot \int_0^1  (\PP((\eul_{t},\eul_{\utn})\in S,\,\eul_{\utn}\in \Theta^{\varepsilon_0}\setminus \Theta^{\varepsilon_1^{\delta}}))^{1/2} \,dt.
\end{aligned}
\end{equation}
Moreover, using Lemma \ref{dist}(iii) with $\alpha=1$ and $q=2p$ we conclude that there exists $c\in(0, \infty)$ such that for all $\delta\in(0, \delta_0]$ and all $t\in[0,1]$,
\begin{align*}
\PP((\eul_{t},\eul_{\utn})\in S, \,\eul_{\utn}\in \Theta^{\varepsilon_0}\setminus \Theta^{\varepsilon_1^{\delta}})\leq \PP(|\eul_{t}-\eul_{\utn}|\geq \varepsilon_1^{\delta},\,\eul_{\utn}\in \Theta^{\varepsilon_0}\setminus \Theta^{\varepsilon_1^{\delta}})\leq c\cdot \delta^{2p}.
\end{align*}
The latter estimate together with \eqref{e4} yields \eqref{q2} for $i=2$.

We next estimate $E^{\delta}_{3}$. Similarly to \eqref{e4} we obtain 
that there exists $c\in(0, \infty)$ such that  for all $\delta\in(0,\delta_0]$,
\begin{equation}\label{e6}
\begin{aligned}
E^{\delta}_{3}\leq c\cdot \delta^{p}\cdot \int_0^1  (\PP((\eul_{t},\eul_{\utn})\in S,\,\eul_{\utn}\in \Theta^{\varepsilon_1^{\delta}}\setminus \Theta^{\varepsilon_2^{\delta}}))^{1/2} \,dt.
\end{aligned}
\end{equation}
Moreover, using Lemma \ref{dist}(ii) with $\alpha=1$ and $q=2p$ we conclude that there exists $c\in(0, \infty)$ such that for all $\delta\in(0, \delta_0]$ and all $t\in[0,1]$,
\[
\PP((\eul_{t},\eul_{\utn})\in S, \,\eul_{\utn}\in \Theta^{\varepsilon_1^{\delta}}\setminus \Theta^{\varepsilon_2^{\delta}})\leq \PP(|\eul_{t}-\eul_{\utn}|\geq d(\eul_{\utn}, \Theta),\,\eul_{\utn}\in \Theta^{\varepsilon_1^{\delta}}\setminus \Theta^{\varepsilon_2^{\delta}}) \leq c\cdot \delta^{2p}.
\]
The latter estimate together with \eqref{e6} yields \eqref{q2} for $i=3$.

We finally extimate $E^{\delta}_{4}$. Note that for all $\delta\in(0,\delta_0]$, all $t\in[0,1]$ and all
$\omega\in \{\eul_{\utn}\in \Theta^{\varepsilon_2^{\delta}}\}$ we have $t-\utn(\omega)\leq \delta^2\cdot\log^4(1/\delta)$. 
Thus, using Lemma \ref{eulprop} we obtain that there exist $c_1, c_2\in(0, \infty)$ such that for all $\delta\in(0,\delta_0]$,
\begin{equation}\label{mm1}
\begin{aligned}
E^{\delta}_{4} &\leq \int_0^1  \EE\bigl[|\eul_{t}-\eul_{\utn}|^{2p}\cdot \ind_{\Theta^{\varepsilon_2^{\delta}}}(\eul_{\utn})\bigr]\,dt\\
&\leq \int_0^1  \EE\bigl[|\eul_{t}-\eul_{\utn}|^{4p}\cdot \ind_{\Theta^{\varepsilon_2^{\delta}}}(\eul_{\utn})\bigr]^{1/2}\cdot (\PP(\eul_{\utn}\in \Theta^{\varepsilon_2^{\delta}}))^{1/2}\,dt\\
&\leq c_1\int_0^1  \EE\bigl[\sup_{s\in[0\vee(t-\delta^2\cdot\log^4(1/\delta)), t]}|\eul_{s}-\eul_{0\vee (t-\delta^2\cdot\log^4(1/\delta))}|^{4p}\bigr]^{1/2}\cdot (\PP(\eul_{\utn}\in \Theta^{\varepsilon_2^{\delta}}))^{1/2}\,dt\\
&\leq c_2\cdot\delta^{2p}\cdot \log^{4p}(1/\delta)\cdot \int_0^1 (\PP(\eul_{\utn}\in \Theta^{\varepsilon_2^{\delta}}))^{1/2}\,dt\\
&\leq c_2\cdot\delta^{2p}\cdot \log^{4p}(1/\delta)\cdot \Bigl(\int_0^1 \PP(\eul_{\utn}\in \Theta^{\varepsilon_2^{\delta}})\,dt\Bigr)^{1/2}.
\end{aligned}
\end{equation} 
Employing Lemma \ref{occup} with $f=1$ and $\gamma=1/2$ and Lemma \ref{dist}(i) with $\alpha=1$ and $q=1$ we obtain that there exist $c\in(0, \infty)$ such that for all $\delta\in(0,\delta_0]$,
\begin{equation}\label{ww}
\begin{aligned}
\int_0^1\PP(\eul_{\utn}\in \Theta^{\varepsilon_2^{\delta}})\,dt&=\int_0^1\PP(|\eul_{t}-\eul_{\utn}|<\varepsilon_2^{\delta},\, \eul_{\utn}\in \Theta^{\varepsilon_2^{\delta}})\, dt\\
&\qquad\qquad+\int_0^1\PP(|\eul_{t}-\eul_{\utn}|\geq \varepsilon_2^{\delta}, \,\eul_{\utn}\in \Theta^{\varepsilon_2^{\delta}})\, dt\\
& \leq \int_0^1\PP(\eul_{t}\in \Theta^{2\varepsilon_2^{\delta}})\, dt+\int_0^1\PP(|\eul_{t}-\eul_{\utn}|\geq \varepsilon_2^{\delta}, \, \eul_{\utn}\in \Theta^{\varepsilon_2^{\delta}})\,dt\\
&\leq c\cdot \varepsilon_2^{\delta}+ c \cdot \delta\leq 2c\cdot \delta\cdot\log^4(1/\delta).
\end{aligned}
\end{equation}
The latter estimate together with \eqref{mm1} implies that there exist $c_1,c_2\in(0, \infty)$ such that for all $\delta\in(0,\delta_0]$,
\[
E^{\delta}_{4}\leq c_1\cdot \delta^{2p+1/2}\cdot \log^{4p+2}(1/\delta)\leq c_2\cdot\delta^{2p},
\]
which shows that \eqref{q2} holds for $i=4$ and completes the proof of the proposition.
\end{proof}

\subsection{Convergence analysis}\label{4.3} In this subsection we proof the estimate \eqref{ll33}.
Clearly, it is enough to consider the case $p\in\N\setminus\{1\}$.
 For $\delta\in(0,\delta_0]$ and $t\in [0,1]$ we put 
\[
A_t = \int_0^t \mu(X_s)\, ds,\quad \widehat A_{t}^{\delta}  = \int_0^t  \mu(\eul_{\usn})\, ds 
\]
and
\[
B_t = \int_0^t \sigma(X_s)\, dW_s,\quad \widehat B_{t} = \int_0^t \bigl(\sigma(\eul_{\usn}) + \sigma d_\sigma(\eul_{\usn})\cdot (W_s-W_{\usn})\bigr)\, dW_s
\]
as well as
\[
U_{t}^{\delta} = \int_0^t \sigma d_\mu(\eul_{\usn})\cdot (W_s-W_{\usn})\, ds
\]
and we use the decomposition
\begin{equation}\label{end1}
X_t -\eul_{t} = (A_t-\widehat A_{t}^{\delta}-U_{t}^{\delta}) +  (B_t-\widehat B_{t}^{\delta}) + U_{t}^{\delta}.
\end{equation}

Recall the definition \eqref{setS} of the set $S$. For all $\delta\in(0, \delta_0]$, all  $s\in [0,1]$ and all $f\in\{\mu, \sigma\}$ we have
\begin{align*}
& |f(X_s) - f(\eul_{\usn}) - \sigma d_f(\eul_{\usn})\cdot (W_s-W_{\usn})| \\  & \qquad \le |f(X_s) - f(\eul_{s})|\ + |f(\eul_{s})-f(\eul_{\usn})-d_f(\eul_{\usn})\cdot(\eul_{s}-\eul_{\usn})|\cdot \ind_{S^c}(\eul_{s},\eul_{\usn})\\
& \qquad \qquad + |f(\eul_{s})-f(\eul_{\usn})-d_f(\eul_{\usn})\cdot(\eul_{s}-\eul_{\usn})|\cdot \ind_{S}(\eul_{s},\eul_{\usn})\\
 & \qquad \qquad + \bigl|d_f(\eul_{\usn})\cdot ( \mu(\eul_{\usn})(s-\usn) + \tfrac{1}{2}\sigma d_\sigma(\eul_{\usn})\cdot ((W_s-W_{\usn})^2 -(s-\usn)))\bigr|.
\end{align*}
Using the Lipschitz continuity of $\mu$ and $\sigma$ as well as~\eqref{LG},~\eqref{bound} and~\eqref{taylor} we thus obtain that there exists $c\in (0,\infty)$ such that for all $\delta\in(0,\delta_0]$, all  $s\in [0,1]$ and all $f\in\{\mu, \sigma\}$, 
\begin{equation}\label{end4}
\begin{aligned}
& |f(X_s) - f(\eul_{\usn}) - \sigma d_f(\eul_{\usn})\cdot (W_s-W_{\usn})| \\ 
& \qquad \le c\cdot |X_s - \eul_{s}| +  c\cdot |\eul_{s} - \eul_{\usn}|^2 + c\cdot |\eul_{s} - \eul_{\usn}|\cdot \ind_{S}(\eul_{s},\eul_{\usn})\\
& \qquad \qquad + c\cdot (1+|\eul_{\usn}|)\cdot (\delta + |W_s-W_{\usn}|^2).
\end{aligned}
\end{equation}

 Employing~\eqref{end4}, Lemma~\ref{eulprop} and Proposition \ref{prop1}  we conclude that there exist $c_1,c_2, c_3\in (0,\infty)$ such that for all $\delta\in(0,\delta_0]$ and all $t\in [0,1]$,
\begin{equation}\label{end5}
\begin{aligned}
& \EE\Bigl[\,\sup_{0\le s\le t}|A_s-\widehat A_{s}^{\delta}-U_{s}^{\delta}|^p\Bigr] \\
& \qquad \qquad\le \EE\Bigl[\Bigl|\int_0^t |\mu(X_s) - \mu(\eul_{\usn}) - \sigma d_\mu(\eul_{\usn})\cdot (W_s-W_{\usn})|\, ds\Bigr|^p\Bigr]\\
& \qquad\qquad \le c_1\cdot \int_0^t \EE\bigl[|X_s - \eul_{s}|^p\bigr]\, ds + c_1\cdot \int_0^t \EE\bigl[|\eul_{s} - \eul_{\usn}|^{2p}\bigr]\, ds\\
& \qquad \qquad \qquad+ c_1\cdot \EE\Bigl[\Bigl|\int_0^t |\eul_{s}-\eul_{\usn}|\cdot \ind_{S} (\eul_{s},\eul_{\usn})\, ds\Bigr|^p\Bigr] \\
& \qquad \qquad \qquad+ c_1\cdot \int_0^t \EE\bigl[1+\sup_{u\in[0,1]}|\eul_{u}|^{2p}\bigr]^{1/2}\cdot \EE\bigl[\delta^{2p} + \sup_{u\in[0\vee (s-\delta), s]}|W_s-W_{u}|^{4p}\bigr]^{1/2}\, ds\\
& \qquad\qquad \le c_1\cdot \int_0^t \EE\bigl[|X_s - \eul_{s}|^p\bigr]\, ds +c_1\cdot \EE\Bigl[\Bigl|\int_0^t |\eul_{s}-\eul_{\usn}|^2\cdot \ind_{S} (\eul_{s},\eul_{\usn})\, ds\Bigr|^{p/2}\Bigr]\\
&\qquad \qquad \qquad+c_2\cdot \int_0^t \EE\bigl[\sup_{u\in[0\vee (s-\delta), s]}|\eul_{u} - \eul_{0\vee (s-\delta)}|^{2p}\bigr]\, ds+ c_2\cdot\delta^p\\
& \qquad\qquad \le c_1\cdot \int_0^t \EE\bigl[|X_s - \eul_{s}|^p\bigr]\, ds + c_3\cdot\delta^p.
\end{aligned}
\end{equation}

Using the Burkholder-Davis-Gundy inequality, \eqref{end4},  Lemma~\ref{eulprop} and Proposition \ref{prop1}  we obtain that there exist $c_1,c_2,c_3\in (0,\infty)$ such that for all $\delta\in(0,\delta_0]$ and all $t\in [0,1]$,
\begin{equation}\label{end7}
\begin{aligned}
 &\EE\Bigl[\,\sup_{0\le s\le t}|B_s-\widehat B_{s}^{\delta}|^p\Bigr]\\
  &  \qquad\qquad \le c_1\cdot \EE\Bigl[\Bigl|\int_0^t |\sigma(X_s) - \sigma(\eul_{\usn}) - \sigma d_\sigma(\eul_{\usn})\cdot (W_s-W_{\usn})|^2\, ds\Bigr|^{p/2}\Bigr] \\
& \qquad\qquad  \le c_2\cdot \int_0^t \EE\bigl[|X_s - \eul_{s}|^p\bigr]\, ds + c_2\cdot \int_0^t \EE\bigl[|\eul_{s} - \eul_{\usn}|^{2p}\bigr]\, ds\\
&  \qquad\qquad \qquad+ c_2\cdot \EE\Bigl[\Bigl|\int_0^t |\eul_{s}-\eul_{\usn}|^2\cdot \ind_{S} (\eul_{s},\eul_{\usn})\, ds\Bigr|^{p/2}\Bigr] \\
&  \qquad\qquad  \qquad+ c_2\cdot \int_0^t \EE\bigl[1+\sup_{u\in[0,1]}|\eul_{u}|^{2p}\bigr]^{1/2}\cdot \EE\bigl[\delta^{2p} + \sup_{u\in[0\vee( s-\delta), s]}|W_s-W_{u}|^{4p}\bigr]^{1/2}\, ds\\
&   \qquad\qquad \le c_2\cdot \int_0^t \EE\bigl[|X_s - \eul_{s}|^p\bigr]\, ds + c_3\cdot\delta^p.
\end{aligned}
\end{equation}

Combining~\eqref{end1} with~\eqref{end5} and~\eqref{end7} we conclude that there exists $c\in (0,\infty)$ such that for 
all $\delta\in(0,\delta_0]$ and
all $t\in[0,1]$,
\begin{equation}\label{end8}
\begin{aligned}
\EE\bigl[\,\sup_{0\le s\le t}|X_t-\eul_{t}|^p\bigr] \le c\cdot \int_0^t \EE\bigl[\sup_{0\le u\le s}|X_u - \eul_{u}|^p\bigr]\, ds + c\cdot\delta^p + \EE\bigl[\,\sup_{0\le s\le t}|U_{s}^{\delta}|^p\bigr].
\end{aligned}
\end{equation}
Note that $\EE\bigl[\sup_{0\le u\le 1}|X_u-\eul_u|^p\bigr] < \infty$ due to~\eqref{mom} and Lemma~\ref{eulprop}.
Below we show that there exists $c\in (0,\infty)$ such that for all $\delta\in(0,\delta_0]$,
\begin{equation}\label{end9}
\begin{aligned}
\EE\bigl[\,\sup_{0\le s\le 1}|U_{s}^{\delta}|^p\bigr] \le c\cdot \delta^p.
\end{aligned}
\end{equation}
Inserting~\eqref{end9} into~\eqref{end8} and applying the Gronwall inequality then yields the error estimate \eqref{ll33} in Theorem~\ref{Thm1}.

We turn to the proof of~\eqref{end9}. 
Clearly, for all $\delta\in(0, \delta_0]$, all $i\in\N_0$ and all $s\in [\tau_{i}^{\delta}, \tau_{i+1}^{\delta}]$,
\begin{equation}\label{end10}
\begin{aligned}
U^{\delta}_{s\wedge 1} &= U^{\delta}_{\tau_{i}^{\delta}\wedge 1} + \sigma d_\mu(\eul_{\tau_{i}^{\delta}\wedge 1})\cdot \int_{\tau_{i}^{\delta}\wedge 1}^{s\wedge 1}(W_u-W_{\tau_{i}^{\delta}})\, du.
\end{aligned}
\end{equation}
For $\delta\in(0,\delta_0]$ let $n^\delta$ be given by \eqref{ndelta}.
Using~\eqref{LG} and~\eqref{bound}  we obtain from~\eqref{end10} that there exists $c\in (0,\infty)$ such that for all $\delta\in(0,\delta_0]$,
\begin{equation}\label{end11}
\begin{aligned}
\sup_{0\le s\le 1} |U_{s}^{\delta}| &=\max_{i=0,\dots,n^{\delta}-1}\sup_{\tau_{i}^{\delta}\leq s \leq\tau_{i+1}^{\delta}} |U_{s\wedge 1}^{\delta}|\\
& \le \max_{i=0,\dots,n^{\delta}-1}|U_{\tau_{i}^{\delta}\wedge 1}^{\delta}| +\max_{i=0,\dots,n^{\delta}-1}|\sigma d_\mu(\eul_{\tau_{i}^{\delta}\wedge 1})|\cdot \int_{\tau_{i}^{\delta}\wedge 1}^{\tau_{i+1}^{\delta}\wedge 1}|W_u-W_{\tau_{i}^{\delta}}|\, du\\
& \le \max_{i=0,\dots,n^{\delta}-1}|U_{\tau_{i}^{\delta}\wedge 1}^{\delta}|+ c\cdot (1+\sup_{0\leq s\leq 1}|\eul_s|)\cdot \max_{i=0,\dots,n^{\delta}-1}\int_{\tau_{i}^{\delta}\wedge 1}^{\tau_{i+1}^{\delta}\wedge 1}|W_u-W_{\tau_{i}^{\delta}}|\, du.
\end{aligned}
\end{equation}

Let $\delta\in(0,\delta_0]$. Employing \eqref{end10}, \eqref{LG}, \eqref{bound} and Lemma~\ref{eulprop} one can show  by induction on  $i\in\{0,\dots,n^{\delta}-1\}$ that $\EE\bigl[|U^{\delta}_{\tau_{i}^{\delta}\wedge 1}\bigr|\bigr]<\infty$ for all $i\in\{0,\dots,n^{\delta}-1\}$. Moreover, using Lemma \ref{Stime}(vi),(v) one can show by induction on  $i\in\{0,\dots,n^{\delta}-1\}$ that $U^{\delta}_{\tau_{i}^{\delta}\wedge 1}$ is $\mathcal F_{\tau_{i}^{\delta}\wedge 1}/ \mathcal B(\R)$-measurable for all $i\in\{0,\dots,n^{\delta}-1\}$.
Finally, observe that for all $i\in\{0,\dots,n^{\delta}-2\}$,
\[
\int_{\tau_{i}^{\delta}\wedge 1}^{\tau_{i+1}^{\delta}\wedge 1}(W_u-W_{\tau_{i}^{\delta}})\, du
= \int_{0}^{(\tau_{i+1}^{\delta}\wedge 1)-(\tau_{i}^{\delta}\wedge 1)}W^{\tau_{i}^{\delta}\wedge 1}_u\, du.
\]
Using Lemma \ref{Stime}(vi),(v),(vi) we therefore obtain that for all $i\in\{0,\dots,n^{\delta}-2\}$,
\begin{align*}
\EE\bigl[U^{\delta}_{\tau_{i+1}^{\delta}\wedge 1}| \mathcal F_{\tau_{i}^{\delta}\wedge 1}\bigr]
&=U^{\delta}_{\tau_{i}^{\delta}\wedge 1} + \sigma d_\mu(\eul_{\tau_{i}^{\delta}\wedge 1})\cdot \int_{0}^{(\tau_{i+1}^{\delta}\wedge 1)-(\tau_{i}^{\delta}\wedge 1)} \EE\bigl[W^{\tau_{i}^{\delta}\wedge 1}_u| \mathcal F_{\tau_{i}^{\delta}\wedge 1}\bigr]\, du=U^{\delta}_{\tau_{i}^{\delta}\wedge 1}.
\end{align*}
Hence,  the sequence $(U^{\delta}_{\tau_{i}^{\delta}\wedge 1}, \mathcal F_{\tau_{i}^{\delta}\wedge 1})_{i\in\{0,\dots,n^{\delta}-1\}}$ is a martingale. 

Employing the Burkholder-Davis-Gundy inequality as well as~\eqref{LG},~\eqref{bound}, Lemma~\ref{eulprop} and Lemma \ref{Stime1} we conclude that there exist $c_1,c_2,c_3\in (0,\infty)$ such that for all $\delta\in(0,\delta_0]$,
\begin{equation}\label{end13}
\begin{aligned}
&\EE\bigl[\, \max_{i=0,\dots,n^{\delta}-1}|U_{\tau_{i}^{\delta}\wedge 1}^{\delta}|^p\Bigr] \\
& \qquad\le 
\EE\Bigl[\Bigl( \sum_{i=0}^{n^{\delta}-1} \Bigl(\sigma d_\mu(\eul_{\tau_{i}^{\delta}\wedge 1})\cdot \int_{\tau_{i}^{\delta}\wedge 1}^{\tau_{i+1}^{\delta}\wedge 1}(W_u-W_{\tau_{i}^{\delta}})\, du\Bigr)^2\Bigr)^{p/2}\Bigr]\\
&\qquad \le c_1\cdot \EE\bigl[(1+\sup_{0\leq s\leq 1}|\eul_s|^{2p})\bigr]^{1/2}\cdot \EE\Bigl[\Bigl( \sum_{i=0}^{n^{\delta}-1} \Bigl( \int_{\tau_{i}^{\delta}\wedge 1}^{\tau_{i+1}^{\delta}\wedge 1}(W_u-W_{\tau_{i}^{\delta}})\, du\Bigr)^2\Bigr)^{p}\Bigr]^{1/2}\\
&\qquad \le c_2 \cdot \EE\Bigl[\Bigl( \sum_{i=0}^{n^{\delta}-1} ((\tau_{i+1}^{\delta}\wedge 1)-(\tau_{i}^{\delta}\wedge 1))\cdot  \int_{\tau_{i}^{\delta}\wedge 1}^{\tau_{i+1}^{\delta}\wedge 1}(W_u-W_{\tau_{i}^{\delta}})^2\, du\Bigr)^{p}\Bigr]^{1/2}\\
&\qquad \leq c_2  \cdot \delta^{p/2}\cdot \EE\Bigl[\Bigl(  \int_{0}^{1}(W_u-W_{\uun})^2\, du\Bigr)^{p}\Bigr]^{1/2}\\
&\qquad \leq c_2  \cdot \delta^{p/2}\cdot \Bigl(  \int_{0}^{1}\EE \bigl[\sup_{s\in[0, \delta]}|W^{\uun}_s|^{2p}\bigr]\, du \Bigr)^{1/2}\leq c_3\cdot\delta^p.
\end{aligned}
\end{equation}
Furthermore, using Lemma~\ref{eulprop} and Lemma \ref{Stime1} we obtain that there exists
$c_1\in (0,\infty)$
such that for all $\delta\in(0,\delta_0]$,
\begin{equation}\label{end14}
\begin{aligned}
& \EE\Bigl[\Bigl((1+\sup_{0\leq s\leq 1}|\eul_s|)
\cdot \max_{i=0,\dots,n^{\delta}-1}\int_{\tau_{i}^{\delta}\wedge 1}^{\tau_{i+1}^{\delta}\wedge 1}|W_u-W_{\tau_{i}^{\delta}}|\, du\Bigr)^p\Bigr] \\
& \qquad\qquad \le \EE\bigl[(1+\sup_{0\leq s\leq 1}|\eul_s|^{2p})\bigr]^{1/2}
\cdot\EE\Bigl[ \sum_{i=0}^{n^\delta-1} \Bigl( \int_{\tau_{i}^{\delta}\wedge 1}^{\tau_{i+1}^{\delta}\wedge 1}|W_u-W_{\tau_{i}^{\delta}}|\, du\Bigr)^{2p}\Bigr]^{1/2}\\
& \qquad\qquad \le c_1\cdot \EE\Bigl[ \sum_{i=0}^{n^\delta-1}  ((\tau_{i+1}^{\delta}\wedge 1)-(\tau_{i}^{\delta}\wedge 1))^{2p-1}\cdot  \int_{\tau_{i}^{\delta}\wedge 1}^{\tau_{i+1}^{\delta}\wedge 1}|W_u-W_{\tau_{i}^{\delta}}|^{2p}\, du\Bigr]^{1/2}\\
& \qquad\qquad \le c_1\cdot \delta^{\frac{2p-1}{2}}\cdot \Bigl(  \int_{0}^{1}\EE \bigl[\sup_{s\in[0, \delta]}|W^{\uun}_s|^{2p}\bigr]\, du \Bigr)^{1/2}\leq c_1\cdot \delta^{\frac{3p-1}{2}}\leq  c_1\cdot \delta^{p}.
\end{aligned}
\end{equation}
Combining~\eqref{end11} with~\eqref{end13} and~\eqref{end14} 
yields~\eqref{end9}
and completes the proof of the estimate~\eqref{ll33} in Theorem~\ref{Thm1}.

\subsection{Cost analysis.} \label{cost} In this subsection we proof the estimate \eqref{ll32}.
Clearly, for all  $\delta\in(0, \delta_0]$  and all $i\in\N$ we have
\[
1=\int_{\tau_{i-1}^{\delta}}^{\tau_{i}^{\delta}} \frac{1}{\tau_{i}^{\delta}-\tau_{i-1}^{\delta}}\,dt =\int_{\tau_{i-1}^{\delta}}^{\tau_{i}^{\delta}} \frac{1}{h^{\delta}(\eul_{\utn})}\,dt.
\]
Thus, for all $\delta\in(0, \delta_0]$,
\[
N(\eul_1)=1+\sum_{i=1}^\infty 1_{\{\tau_i^{\delta}<1\}}=1+\sum_{i=1}^\infty 1_{\{\tau_i^{\delta}<1\}}\cdot \int_{\tau_{i-1}^{\delta}}^{\tau_{i}^{\delta}} \frac{1}{h^{\delta}(\eul_{\utn})}\,dt \leq 1+\int_0^1\frac{1}{h^{\delta}(\eul_{\utn})}\,dt.
\]
For $\delta\in(0, \delta_0]$ and $i\in\{1,2,3\}$ put
\[
I_i^{\delta}=\EE\Bigl[\int_0^1\frac{1}{h^{\delta}(\eul_{\utn})}\cdot 1_{O_i^{\delta}}(\eul_{\utn})\,dt\Bigr],
\]
where
\[
O_1^{\delta}=(\Theta^{\varepsilon_1^{\delta}})^c, \quad O_2^{\delta}=\Theta^{\varepsilon_1^{\delta}}\setminus \Theta^{\varepsilon_2^{\delta}} , \quad O_3^{\delta}=\Theta^{\varepsilon_2^{\delta}}.
\]
Then for all $\delta\in(0, \delta_0]$,
\begin{equation}\label{ww2}
\EE[N(\eul_1)]\leq 1+\sum_{i=1}^3 I_i^{\delta}.
\end{equation}
Clearly,
\begin{equation}\label{ww3}
I_1^{\delta}=\delta^{-1}\cdot\int_0^1\PP\bigl(\eul_{\utn}\in (\Theta^{\varepsilon_1^{\delta}})^c\bigr)\,dt\leq \delta^{-1}.
\end{equation}
Moreover, observing \eqref{ww} we obtain that there exists $c\in(0, \infty)$ such that for all $\delta\in(0, \delta_0]$,
\begin{equation}\label{ww4}
I_3^{\delta}=\delta^{-2}\cdot\log^{-4}(1/\delta)\cdot\int_0^1\PP\bigl(\eul_{\utn}\in \Theta^{\varepsilon_2^{\delta}}\bigr)\,dt\leq c\cdot \delta^{-1}.
\end{equation}
Below we show that there exists $c\in(0, \infty)$ such that for all $\delta\in(0, \delta_0]$,
\begin{equation}\label{ww1}
I_2^{\delta}\leq c\cdot \delta^{-1}.
\end{equation}
Combining \eqref{ww2} to \eqref{ww1} we obtain \eqref{ll32}.

It remains to prove \eqref{ww1}. For  $\delta\in(0, \delta_0]$ and $t\in[0,1]$ put
\[
D_t^{\delta}=\bigl{\{}|\eul_{t}-\eul_{\utn}|\leq \tfrac{1}{2}d(\eul_{\utn}, \Theta)\bigr{\}}.
\]
Clearly, for all $\delta\in(0, \delta_0]$,
\begin{equation}\label{vv1}
I_{2}^{\delta}=I_{2,1}^{\delta}+I_{2,2}^{\delta},
\end{equation}
where
\[
I_{2,1}^{\delta}=\EE\Bigl[\int_0^1\frac{1}{h^{\delta}(\eul_{\utn})}\cdot 1_{O_2^{\delta}}(\eul_{\utn})\cdot 1_{D_t^{\delta}}\,dt\Bigr],\quad  I_{2,2}^{\delta}=\EE\Bigl[\int_0^1\frac{1}{h^{\delta}(\eul_{\utn})}\cdot 1_{O_2^{\delta}}(\eul_{\utn})\cdot 1_{(D_t^{\delta})^c}\,dt\Bigr].
\]
Observing the fact  that the distance function $d(\cdot, \Theta)\colon\R\to [0, \infty)$ is Lipschitz continuous with Lipschitz seminorm $1$, i.e. for all $x,y\in\R$,
\[
|d(x, \Theta)-d(y, \Theta)|\leq |x-y|,
\]
we obtain that for all $\delta\in(0, \delta_0]$ and all $t\in[0,1]$,
\[
\{\eul_{\utn}\in O_2^{\delta}\}\cap D_t^{\delta}\subseteq \{\eul_{t}\in \Theta^{\frac{3}{2}\varepsilon_1^{\delta}}\setminus \Theta^{\frac{1}{2}\varepsilon_2^{\delta}}\}\cap \bigl{\{}\tfrac{1}{2}d(\eul_{\utn}, \Theta)\leq d(\eul_t, \Theta)\leq \tfrac{3}{2}d(\eul_{\utn}, \Theta)\bigr{\}}.
\]
Thus, for all $\delta\in(0, \delta_0]$,
\begin{equation}\label{j8}
\begin{aligned}
I_{2,1}^{\delta}&=\log^4(1/\delta) \cdot\EE\Bigl[\int_0^1 \frac{1}{d(\eul_{\utn}, \Theta)^2}\cdot 1_{O_2^{\delta}}(\eul_{\utn})\cdot 1_{D_t^{\delta}}\,dt\Bigr]\\
&\leq \frac{9}{4}  \log^4(1/\delta)\cdot \EE\Bigl[\int_0^1\frac{1}{d(\eul_{t}, \Theta)^2}\cdot 1_{ \Theta^{\frac{3}{2}\varepsilon_1^{\delta}}\setminus \Theta^{\frac{1}{2}\varepsilon_2^{\delta}}}(\eul_{t})\,dt\Bigr].
\end{aligned}
\end{equation}
For $\delta\in(0, \delta_0]$ put $\overline\varepsilon^\delta=\delta^{3/4}\cdot \log^3(1/\delta)$ and observe that $\varepsilon_2^{\delta}\leq \overline\varepsilon^\delta\leq \varepsilon_1^{\delta}$ for all $\delta\in(0, \delta_0]$. Hence, \eqref{j8} implies that for all $\delta\in(0, \delta_0]$,
\begin{align*}
I_{2,1}^{\delta}
&\leq \frac{9}{4}  \log^4(1/\delta)\cdot \EE\Bigl[\int_0^1\frac{1}{d(\eul_{t}, \Theta)^2}\cdot 1_{ \Theta^{\frac{3}{2}\varepsilon_1^{\delta}}\setminus \Theta^{\overline\varepsilon^\delta}}(\eul_{t})\,dt\Bigr]+\\
&\qquad+ \frac{9}{4}  \log^4(1/\delta)\cdot \EE\Bigl[\int_0^1\frac{1}{d(\eul_{t}, \Theta)^2}\cdot 1_{ \Theta^{\overline\varepsilon^\delta}\setminus \Theta^{\frac{1}{2}\varepsilon_2^{\delta}}}(\eul_{t})\,dt\Bigr]\\
&\leq \frac{9}{4}  \log^4(1/\delta)\cdot \EE\Bigl[\int_0^1\frac{1}{\max(\overline\varepsilon^\delta,d(\eul_{t}, \Theta))^2}\cdot 1_{ \Theta^{\frac{3}{2}\varepsilon_1^{\delta}}}(\eul_{t})\,dt\Bigr]\\
&\qquad +\frac{9}{4}  \log^4(1/\delta)\cdot \EE\Bigl[\int_0^1\frac{1}{\max(\tfrac{1}{2}\varepsilon_2^{\delta},d(\eul_{t}, \Theta))^2}\cdot 1_{ \Theta^{\overline\varepsilon^\delta}}(\eul_{t})\,dt\Bigr].
\end{align*}
Applying Lemma \ref{occup} with $f=1/\max(\overline\varepsilon^\delta,\cdot)^2$ and $\gamma=1/2$ and with  $f=1/\max(\tfrac{1}{2}\varepsilon_2^{\delta},\cdot)^2$ and $\gamma=1/6$ we therefore conclude that there exist $c_1, c_2, c_3\in(0, \infty)$ such that for all $\delta\in(0, \delta_0]$,
\begin{equation}\label{vv2b}
\begin{aligned}
I_{2,1}^{\delta}
&\leq c_1\cdot \log^4(1/\delta)\cdot \Bigl(\int_0^{\frac{3}{2}\varepsilon_1^{\delta}}\frac{1}{\max(\overline\varepsilon^\delta,x)^2}dx+(\overline\varepsilon^\delta)^{-2}\cdot (\varepsilon_1^{\delta}+\delta)\\
&\qquad\qquad+\int_0^{\overline\varepsilon^\delta}\frac{1}{\max(\tfrac{1}{2}\varepsilon_2^{\delta},x)^2}dx+(\varepsilon_2^\delta)^{-2}\cdot \bigl((\overline\varepsilon^{\delta})^{\frac{4}{3}}+\delta^{\frac{4}{3}}\bigr)\Bigr)\\
&\leq c_2\cdot \log^4(1/\delta)\cdot\bigl((\overline\varepsilon^\delta)^{-1}+(\overline\varepsilon^\delta)^{-2}\cdot \varepsilon_1^{\delta}+(\varepsilon_2^\delta)^{-1}+(\varepsilon_2^\delta)^{-2}\cdot (\overline\varepsilon^{\delta})^{\frac{4}{3}}\bigr)\leq c_3\cdot \delta^{-1}.
\end{aligned}
\end{equation}
Moreover, employing \eqref{vv2} and Lemma \ref{dist}(ii) with $\alpha=1/2$ and $q=2$ we obtain that there exists $c\in(0, \infty)$ such that for all $\delta\in(0, \delta_0]$,
\begin{equation}\label{vv3}
\begin{aligned}
I_{2,2}^{\delta}&\leq \delta^{-2}\cdot \log^{-4}(1/\delta)\cdot \int_0^1 \PP\bigl(|\eul_{t}-\eul_{\utn}|> \tfrac{1}{2}d(\eul_{\utn}, \Theta),\, \eul_{\utn}\in \Theta^{\varepsilon_1^{\delta}}\setminus \Theta^{\varepsilon_2^{\delta}}\bigr)\,dt\\
&\leq c\cdot \log^{-4}(1/\delta).
\end{aligned}
\end{equation}
Combining \eqref{vv1}, \eqref{vv2b} and \eqref{vv3} we obtain \eqref{ww1}. This completes the proof of the estimate ~\eqref{ll32} in Theorem~\ref{Thm1}.

\section*{Acknowledgement}
I am
grateful to Thomas M\"uller-Gronbach for stimulating discussions
on the topic of this article.

\bibliographystyle{acm}
\bibliography{bibfile}

\end{document}